\newtheorem{thm}{Theorem}[section]
\newtheorem{lem}{Lemma}[section]
\newtheorem{rem}{Remark}[section]
\newtheorem{Def}{Definition}[section]
\newtheorem{definition}{Definition}[section]
\let\originalleft\left
\let\originalright\right
\renewcommand{\left}{\mathopen{}\mathclose\bgroup\originalleft}
\renewcommand{\right}{\aftergroup\egroup\originalright}
\newcommand{\Addresses}{{% additional braces for segregating \footnotesixe
		\footnote{
			%	\footnotesixe

			\noindent	 \textsuperscript{1,2} Department of Applied Mathematics and Scientific Computing, Indian Institute of Technology Roorkee, Roorkee, 247667, India.	
			
			\noindent  \textit{e-mail\textsuperscript{1}:} \texttt{g\_gupta@as.iitr.ac.in}
			
			\noindent  \textit{e-mail\textsuperscript{2}:} \texttt{jay.dabas@gmail.com.}

			%			\noindent \textsuperscript{2}Department of Mathematics, Indian Institute of Technology Roorkee, Roorkee, 247667, India.\par\nopagebreak
			%			\noindent  \textit{e-mail\textsuperscript{2}:} \texttt{manilfma@iitr.ac.in, maniltmohan@gmail.com.}

			\noindent \textsuperscript{*}Corresponding author.
			
			\noindent\textbf{Keywords:} Controllability; Integro-differential equations; Neutral systems; Impulsive systems; Resolvent family.
			
			\vspace{0.2cm}
			\noindent\textbf{AMS 2000 Mathematics Subject Classification:} 93C23; 93B05; 34K30; 34G20; 47H10; 45J05.

}}}
\begin{document}
	\title[]{Study on Control Problem of a Impulsive Neutral Integro-Differential Equations with Fading Memory  \Addresses}
	\author [Garima Gupta.  Jaydev Dabas]{Garima Gupta\textsuperscript{1}.  Jaydev Dabas\textsuperscript{2*}}
	\maketitle
	\begin{abstract}
		This article addresses control problems for semilinear impulsive neutral integro-differential equations with memory in a Banach space. It investigates the approximate controllability of linear and semilinear systems and proves the establishment of mild solutions in the semilinear setting. The approach involves constructing a resolvent family for the corresponding integro-differential equation of linear type without memory. The results for the linear system are established first, then extended to the semilinear scenario, followed by a detailed example to illustrate the theoretical findings.
		
	\end{abstract}
	\section{\textbf{Introduction}}\label{intro}\setcounter{equation}{0}
	\vspace{0.5cm}
	Understanding how heat moves through complex materials requires better models than those used in classical thermal theory. The traditional heat equation works well for materials like copper, aluminium, silicon carbide, and polystyrene, where heat flow depends directly on temperature and its gradient. However, it doesn’t work well for materials with fading memory materials that react gradually to changes in heat. This is because the classical model wrongly assumes that any change in heat source instantly affects the material. To handle such cases accurately, more advanced models are needed.
	To solve this problem, Gurtin and Pikin \cite{gurtin1968general} and Nunziato \cite{nunziato1971heat} developed a better model for heat flow in materials with memory. In their approach, internal energy and heat flow depend on the material’s past behaviour, not just the present. Building on their work, Lunardi \cite{lunardi1990linear} studied a version of the linear heat equation for such materials in 1990, described as follows:
	\begin{align}\label{eq1.1} \begin{split}
			\frac{\mathrm{d}}{\mathrm{~d} t}\left[x(t, \zeta)+\int_{-\infty}^{t} k_{1}(t-s) x(s, \zeta) \mathrm{d} s\right]&=c \Delta x(t, \zeta) 
			+\int_{-\infty}^{t} k_{2}(t-s) \Delta x(s, \zeta) \mathrm{d}s\\ &\qquad \qquad+h(t, \zeta, x(t, \zeta)), t \geq 0, \zeta \in \Omega, \\
			 x(0, \zeta)&=0, \zeta \in \partial \Omega.
		\end{split}		
	\end{align}
	
	Here, for $(t, \zeta) \in[0, \infty) \times \Omega$, $x(t, \zeta)$ represents the value at the spatial point $\zeta$ and at a given time $t$, where the domain $\Omega \subset \mathbb{R}^{n}$ is open, bounded, and possesses a boundary $\partial \Omega$ of class $C^{2}$. The constant $c$ corresponds to a physical parameter, while the functions $k_{1}, k_{2}: \mathbb{R} \to \mathbb{R}$ characterise the internal energy and the relaxation of heat flux, respectively.
	
	The authors studied this system by turning it into a type of equation called an abstract Volterra equation and found important results about how the solution behaves over time, its smoothness, and whether it stays positive. For more on similar models and their uses, see \cite{wu2012theory} and related references.
	
	Recent investigations into semilinear integro-differential equations have employed the resolvent operator theory as a key analytical tool to understand their solutions, stability, and control. Notably, Dos Santos et al. \cite{dos2011existence} developed a resolvent operator framework for a type of linear neutral integro-differential equation
	\begin{align} \label{eq1.2} \begin{split}
			\frac{\mathrm{d}}{\mathrm{~d} t}\left[w(t)+\int_{0}^{t} \mathrm{G}(t-s) w(s) \mathrm{d} s\right] & =\mathrm{A} w(t)+\int_{0}^{t} \mathrm{~N}(t-s) w(s) \mathrm{d} s, t \in(0, T], \\
			w(0) & =\zeta \in \mathbb{W} .
		\end{split}		
	\end{align}
	
	The authors also explored whether mild, strict, and classical solutions exist for the problem \eqref{eq1.2}. In 2025, Arora and Nandakumaran \cite{arora2025controllability} studied control-related issues governed by a semilinear neutral integro-differential equation incorporating memory effects.
	
	This study is motivated by above discussed work. We investigate the existence of solutions and approximate controllability for a specific type of abstract impulsive neutral integro-differential equation defined on the interval  $J=[0,b]$. Our analysis takes place in a state space $\mathbb{X}$, which is a separable reflexive Banach space along with its dual space $\mathbb{X}^{*}$. Additionally, we employ a Hilbert space $\mathbb{U}$, which is identified with its dual space. The problem is presented in the following abstract form:
	
	\begin{align}\label{P}
		\begin{split}
			\frac{\mathrm{d}}{\mathrm{~d} t}\left[x(t)+\int_{-\infty}^{t} \mathrm{G}(t-s) x(s) \mathrm{d} s\right]= & \mathrm{A} x(t)+\int_{-\infty}^{t} \mathrm{~N}(t-s) x(s) \mathrm{d} s+\mathrm{B} u(t) \\
			& +f\left(t, x_{t}\right), \quad t \in [0, b] \backslash\left\{t_{1}, \ldots, t_{m}\right\}, \\ 
			\Delta x\left(t_{k}\right)&=\mathrm{D}_{k} x\left(t_{k}\right)+\mathrm{E}_{k} v_{k}, \quad k=1, \ldots, m, \\ 
			x_{0}= & \psi \in \mathfrak{B}. 
		\end{split}			
	\end{align}
	
Here, $\mathrm{A}: D(\mathrm{A}) \subseteq \mathbb{X} \to \mathbb{X}$ and $\mathrm{N}(t): D(\mathrm{N}(t)) \subseteq \mathbb{X} \to \mathbb{X}$ for all $t \geq 0$ be closed linear operators. The operators $\mathrm{G}(t): \mathbb{X} \to \mathbb{X}$ for $t \geq 0$, and $\mathrm{B}: \mathbb{U} \to \mathbb{X}$ are assumed to be bounded, with $\|\mathrm{B}\|_{\mathcal{L}(\mathbb{U}, \mathbb{X})} = M$. 

A nonlinear mapping $f: J \times \mathfrak{B} \to \mathbb{X}$ is given, where $\mathfrak{B}$ stands for the phase space, the structure of which will be discussed in detail later. Define the history function $x_t: (-\infty, 0] \to \mathbb{X}$ by $x_t(\theta) = x(t + \theta)$, and assume that $x_t \in \mathfrak{B}$ for every $t \geq 0$. 

The control input consists of a measurable function $u \in L^2(J, \mathbb{U})$, and a sequence $\{v_k\}_{k=1}^m$ with each $v_k \in \mathbb{U}$. At a discrete set of time instants $t_k$ (with $k = 1, \dots, m$, and $0 = t_0 < t_1 < \cdots < t_m < t_{m+1} = b$), the state variable may exhibit discontinuities, characterized by the jump condition
\[
\Delta x(t_k) = x(t_k^+) - x(t_k^-),
\]
where the one-sided limits are given by $x(t_k^\pm) = \lim_{h \to 0^\pm} x(t_k + h)$, and we assume $x(t_k^-) = x(t_k)$.

The notation $\prod_{j=1}^{k} A_j$ denotes the composition $A_1 A_2 \cdots A_k$, with the convention that $\prod_{j=k+1}^{k} A_j = \mathrm{I}$ (i.e., the identity operator). Similarly, $\prod_{j=k}^{1} A_j$ stands for $A_k A_{k-1} \cdots A_1$, and again $\prod_{j=k+1}^{k} A_j = \mathrm{I}$.

Impulsive systems, which involve sudden state changes, are common in fields like AI, biology, and population dynamics. These systems are modelled using differential equations with jumps at specific times. For a comprehensive introduction to the theory of impulsive systems, readers may consult the monograph by Lakshmikantham et al. \cite{lakshmikantham1989theory}. Controllability of such systems especially semilinear impulsive ones is key to managing real-world processes with abrupt shifts. Prior work has focused on both finite and infinite-dimensional spaces \cite{george2000note,arora2020approximate,arora2021approximate}, with recent studies using impulsive resolvent operators in Hilbert spaces \cite{mahmudov2024study,asadzade2024approximate,asadzade2025remarks,gupta2024existence}. However, no research has addressed the approximate controllability of impulsive neutral integro-differential equations with fading memory in Banach spaces using this method. This study aims to fill that gap. We first outline key properties of the resolvent operator, build the impulsive resolvent for the linear system, and then analyse the approximate controllability and mild solutions of both the linear and semilinear systems.

\section{\textbf{Preliminaries}}\label{pre}\setcounter{equation}{0}

The symbol $\langle \cdot, \cdot \rangle$ denotes the duality pairing between the Banach space $\mathbb{X}$ and its topological dual $\mathbb{X}^*$. The spaces $\mathcal{L}(\mathbb{U}; \mathbb{X})$ and $\mathcal{L}(\mathbb{X})$ denote, respectively, the collections of all bounded linear operators from $\mathbb{U}$ to $\mathbb{X}$ and from $\mathbb{X}$ to itself. These operator spaces are equipped with the operator norms $\|\cdot\|_{\mathcal{L}(\mathbb{U}; \mathbb{X})}$ and $\|\cdot\|_{\mathcal{L}(\mathbb{X})}$. 

We now define the resolvent operator associated with the following abstract integro-differential equation as in \cite{dos2011existence}

\begin{align}\label{eq2.1}
	\begin{split}
		\frac{\mathrm{d}}{\mathrm{d}t} \left[x(t) + \int_0^t \mathrm{G}(t - s)x(s)\,\mathrm{d}s \right] &= \mathrm{A}x(t) + \int_0^t \mathrm{N}(t - s)x(s)\,\mathrm{d}s,\quad t \in (0, T]  \\
		x(0) &= \zeta \in \mathbb{X}.
	\end{split}
\end{align}

\begin{definition}[\cite{dos2011existence}]\label{def2.1}
	A collection of bounded linear operators $\left(\mathscr{R}(t)\right)_{t \geq 0}$ on the Banach space $\mathbb{X}$ is referred to as a \textit{resolvent operator} associated with equation \eqref{eq2.1} if the following conditions are satisfied:
	\begin{enumerate}
		\item The mapping $\mathscr{R}: [0, \infty) \to \mathcal{L}(\mathbb{X})$ is strongly continuous and exhibits exponential boundedness. In addition, it holds that $\mathscr{R}(0)z = z$ for every $z \in \mathbb{X}$.
		
		\item For each $z \in D(\mathrm{A})$, the function $t \mapsto \mathscr{R}(t)z$ belongs to the space $C([0, \infty); D_1(\mathrm{A})) \cap C^1((0, \infty); \mathbb{X})$, and it fulfills the following integral-differential identities:
	\end{enumerate}
	
	\begin{align}\label{eq2.2}
		\frac{\mathrm{d}}{\mathrm{d}t} \left[\mathscr{R}(t)z + \int_0^t \mathrm{G}(t - s)\mathscr{R}(s)z\,\mathrm{d}s \right] 
		= \mathrm{A}\mathscr{R}(t)z + \int_0^t \mathrm{N}(t - s)\mathscr{R}(s)z\,\mathrm{d}s,
	\end{align}
	
	\begin{align}\label{eq2.3}
		\frac{\mathrm{d}}{\mathrm{d}t} \left[\mathscr{R}(t)z + \int_0^t \mathscr{R}(t - s)\mathrm{G}(s)z\,\mathrm{d}s \right]
		= \mathscr{R}(t)\mathrm{A}z + \int_0^t \mathscr{R}(t - s)\mathrm{N}(s)z\,\mathrm{d}s,
	\end{align}
	
	for all $t \geq 0$.
\end{definition}

For the analysis, we assume that the conditions (Cd1)–(Cd4) in \cite{arora2025controllability} are satisfied. Furthermore, we suppose that $\sup_{t \in [0, T]} \|\mathscr{R}(t)\|_{\mathcal{L}(\mathbb{X})} \leq M_{\mathscr{R}}$ for some constant $M_{\mathscr{R}}$.

\subsection{Geometry of Banach Spaces and Duality Mapping}

In this subsection, we begin by introducing certain geometric characteristics of Banach spaces, followed by a discussion on the concept of duality mapping and its key properties.

\begin{definition}
	A Banach space $\mathbb{X}$ is called \emph{strictly convex} if, for any two elements $z_1, z_2 \in \mathbb{X}$ with $\|z_1\|_{\mathbb{X}} = \|z_2\|_{\mathbb{X}} = 1$ and $z_1 \neq z_2$, it holds that
	\[
	\|v z_1 + (1 - v) z_2\|_{\mathbb{X}} < 1, \quad \text{for all } 0 < v < 1.
	\]
\end{definition}

\begin{definition}
	A Banach space $\mathbb{X}$ is said to be \emph{uniformly convex} if for every $\epsilon > 0$, there exists a $\delta = \delta(\epsilon) > 0$ such that for all $z_1, z_2 \in \mathbb{X}$ with $\|z_1\|_{\mathbb{X}} = \|z_2\|_{\mathbb{X}} = 1$ and $\|z_1 - z_2\|_{\mathbb{X}} \geq \epsilon$, we have
	\[
	\left\| \frac{z_1 + z_2}{2} \right\|_{\mathbb{X}} \leq 1 - \delta.
	\]
\end{definition}

It is well known that every uniformly convex Banach space is also strictly convex. The modulus of convexity $\delta_{\mathbb{X}}$ of $\mathbb{X}$ is defined by
\[
\delta_{\mathbb{X}}(\epsilon) = \inf \left\{ 1 - \frac{\|z + y\|_{\mathbb{X}}}{2} : \|z\|_{\mathbb{X}} \leq 1, \|y\|_{\mathbb{X}} \leq 1, \|z - y\|_{\mathbb{X}} \geq \epsilon \right\}.
\]
This function is non-decreasing on the interval $[0, 2]$. A Banach space $\mathbb{X}$ is uniformly convex if and only if $\delta_{\mathbb{X}}(\epsilon) > 0$ for all $\epsilon > 0$ (see \cite{pisier1975martingales}).

\begin{definition}
	A Banach space $\mathbb{X}$ is said to be \emph{uniformly smooth} if for every $\epsilon > 0$, there exists a $\delta > 0$ such that for all $z_1, z_2 \in \mathbb{X}$ with $\|z_1\|_{\mathbb{X}} = 1$ and $\|z_2\|_{\mathbb{X}} \leq \delta$, the following inequality holds:
	\[
	\frac{1}{2}\left( \|z_1 + z_2\|_{\mathbb{X}} + \|z_1 - z_2\|_{\mathbb{X}} \right) - 1 \leq \epsilon \|z_2\|_{\mathbb{X}}.
	\]
\end{definition}

\begin{rem}
	A Banach space $\mathbb{X}$ is uniformly smooth if and only if its dual $\mathbb{X}^*$ is uniformly convex. Likewise, $\mathbb{X}$ is uniformly convex if and only if $\mathbb{X}^*$ is uniformly smooth \cite{pisier1975martingales}.
\end{rem}

\subsubsection*{Duality Mapping}

We now recall the concept of the duality mapping.

\begin{definition}[\cite{barbu2012convexity}]
	The duality mapping $\mathscr{J}: \mathbb{X} \rightarrow 2^{\mathbb{X}^*}$ is defined by
	\[
	\mathscr{J}[z] = \left\{ z^* \in \mathbb{X}^* : \langle z, z^* \rangle = \|z\|_{\mathbb{X}}^2 = \|z^*\|_{\mathbb{X}^*}^2 \right\}, \quad \forall z \in \mathbb{X}.
	\]
\end{definition}

It follows immediately that for any scalar $\alpha \in \mathbb{R}$ and $z \in \mathbb{X}$, we have $\mathscr{J}[\alpha z] = \alpha \mathscr{J}[z]$.

\begin{rem}
	\begin{enumerate}
		\item[(i)] If $\mathbb{X}$ is a reflexive Banach space, it is always possible to define an equivalent norm under which both $\mathbb{X}$ and its dual $\mathbb{X}^*$ become strictly convex (see \cite{barbu2012convexity}). Moreover, according to Milman's theorem, any uniformly convex Banach space is reflexive.
		
		\item[(ii)] When $\mathbb{X}^*$ is strictly convex, the duality mapping $\mathscr{J}$ becomes single-valued, strictly monotone, bijective, and demicontinuous. That is, if $z_k \to z$ in $\mathbb{X}$, then $\mathscr{J}[z_k] \rightharpoonup \mathscr{J}[z]$ weakly in $\mathbb{X}^*$ as $k \to \infty$.
	\end{enumerate}
\end{rem}

Let us define the space of piecewise continuous functions:
\[
\mathcal{PC}(J; \mathbb{X}) := \left\{ \psi: J \to \mathbb{X} \;\middle|\; \psi \text{ is piecewise continuous with jumps at } t_k \text{ satisfying } \psi(t_k^-) = \psi(t_k) \right\}.
\]
For $z \in \mathcal{PC}(J; \mathbb{X})$, the norm is defined by $\|z\|_{\mathcal{PC}} := \sup_{t \in J} \|z(t)\|$.

\subsection*{Phase Space}

We now introduce the concept of phase space $\mathfrak{B}$, following the functional framework of Hale and Kato \cite{hino2006functional}, suitably adapted for impulsive systems.

The phase space $\mathfrak{B}$ consists of $\mathbb{X}$-valued functions defined on the interval $(-\infty, 0]$, and is equipped with a seminorm $\|\cdot\|_{\mathfrak{B}}$. The space satisfies the following assumptions:

\begin{enumerate}
	\item[(A1)] Let $z: (-\infty, \mu + \vartheta) \to \mathbb{X}$ be piecewise continuous on $[\mu, \mu + \vartheta)$ for some $\vartheta > 0$, and assume $z_{\mu} \in \mathfrak{B}$. Then for each $t \in [\mu, \mu + \vartheta)$:
	\begin{enumerate}
		\item[(i)] The shifted function $z_t$ also belongs to $\mathfrak{B}$.
		\item[(ii)] There exists a constant $K_1 > 0$, independent of $z$, such that $\|z(t)\|_{\mathbb{X}} \leq K_1 \|z_t\|_{\mathfrak{B}}$.
		\item[(iii)] There exist functions $\Lambda: [0, \infty) \to [1, \infty)$ (continuous) and $Y: [0, \infty) \to [1, \infty)$ (locally bounded), independent of $z$, such that
		\[
		\|z_t\|_{\mathfrak{B}} \leq \Lambda(t - \mu) \sup_{\mu \leq s \leq t} \|z(s)\|_{\mathbb{X}} + Y(t - \mu)\|z_\mu\|_{\mathfrak{B}}.
		\]
	\end{enumerate}
	
	\item[(A2)] The map $t \mapsto z_t$ from $[\mu, \mu + \vartheta)$ into $\mathfrak{B}$ is continuous.
	
	\item[(A3)] The space $\mathfrak{B}$ is complete with respect to the seminorm $\|\cdot\|_{\mathfrak{B}}$.
\end{enumerate}

Let $\psi \in \mathfrak{B}$ be given. We define two auxiliary functions $f_1, f_2: J \to \mathbb{X}$ as
\[
f_1(t) = -\int_{-\infty}^0 \mathrm{G}(t - s)\psi(s)\,\mathrm{d}s, \qquad
f_2(t) = \int_{-\infty}^0 \mathrm{N}(t - s)\psi(s)\,\mathrm{d}s.
\]

In the next section, we formally define the impulsive system considered in \eqref{P}, inspired by the notion of mild solutions introduced in \cite{asadzade2025remarks}.

\begin{definition}
	Let $u \in L^2(J; \mathbb{U})$ be a given control. A function $x: (-\infty, b] \rightarrow \mathbb{X}$ is said to be a \emph{mild solution} of system \eqref{P} if the following conditions hold:
	
	\begin{enumerate}
		\item [(i)] $x(t) = \psi(t)$ for all $t < 0$, where $\psi \in \mathfrak{B}$ denotes the initial history;
		\item [(ii)]the function $f_1$ is differentiable and its derivative $f_1'$ belongs to $L^1(J; \mathbb{X})$;
		\item [(iii)] at each impulsive instant $t_k$ $(k = 1, \ldots, m)$, the jump condition
		\[
		\Delta x(t_k) = \mathrm{D}_k x(t_k) + \mathrm{E}_k v_k
		\]
		is satisfied;
		\item [(iv)] $x$ is continuous on each open interval $(t_{k-1}, t_k]$, and is given by the following expression:
	\end{enumerate}
	
	\begin{align}\label{2}
		x(t) = 
		\begin{cases}
			\mathscr{R}(t)\psi(0) + \displaystyle\int_0^t \mathscr{R}(t - s)\big[\mathrm{B}u(s) + f(s, x_s)\big]\,\mathrm{d}s \\
			\quad + \displaystyle\int_0^t \mathscr{R}(t - s)\big[f_1'(s) + f_2(s)\big]\,\mathrm{d}s, & 0 \leq t \leq t_1, \\[1.5ex]
			\mathscr{R}(t - t_k)x(t_k^+) + \displaystyle\int_{t_k}^t \mathscr{R}(t - s)\big[\mathrm{B}u(s) + f(s, x_s)\big]\,\mathrm{d}s \\
			\quad + \displaystyle\int_{t_k}^t \mathscr{R}(t - s)\big[f_1'(s) + f_2(s)\big]\,\mathrm{d}s, & t_k < t \leq t_{k+1},\quad k = 1, \ldots, m.
		\end{cases}
	\end{align}
	
	The post-impulse state $x(t_k^+)$ is given explicitly by:
	\begin{align*}
		x(t_k^+) &= \prod_{j=k}^{1} (\mathrm{I} + \mathrm{D}_j)\,\mathscr{R}(t_j - t_{j-1})\psi(0) \\
		&\quad + \sum_{i=1}^k \left[\prod_{j=k}^{i+1} (\mathrm{I} + \mathrm{D}_j)\,\mathscr{R}(t_j - t_{j-1})(\mathrm{I} + \mathrm{D}_i)\right] \\
		&\qquad \times \left( \int_{t_{i-1}}^{t_i} \mathscr{R}(t_i - s)[\mathrm{B}u(s) + f(s, x_s)]\,\mathrm{d}s + \int_{t_{i-1}}^{t_i} \mathscr{R}(t_i - s)[f_1'(s) + f_2(s)]\,\mathrm{d}s \right) \\
		&\quad + \sum_{i=2}^{k} \left[\prod_{j=k}^{i} (\mathrm{I} + \mathrm{D}_j)\,\mathscr{R}(t_j - t_{j-1})\right] \mathrm{E}_{i-1}v_{i-1} + \mathrm{E}_k v_k.
	\end{align*}
\end{definition}

	\begin{Def} [\cite{mahmudov2003approximate}]
	We say that the system \eqref{P} is approximately controllable on the interval $J$ if, for any initial function $\psi \in \mathfrak{B}$, any target state $h \in \mathbb{X}$, and for every tolerance $\epsilon > 0$, there exists a control input $u \in \mathrm{L}^2(J; \mathbb{U})$ such that the corresponding mild solution $x(\cdot)$ of system \eqref{P} fulfills the condition:
	\begin{equation*}
		\left\|x(b) - h\right\|_{\mathbb{X}} \leq \epsilon.
	\end{equation*}
	\end{Def}

\subsection{Controllability of Linear system}
Now, we proceed to establish some results to study the approximate controllability of the following neutral integro-differential linear system subject to impulsive effects:
	\begin{align}\label{LP}
		\begin{split}
			\frac{\mathrm{d}}{\mathrm{~d} t}\left[x(t)+\int_{-\infty}^{t} \mathrm{G}(t-s) x(s) \mathrm{d} s\right]&=\mathrm{A} x(t)+\int_{-\infty}^{t} \mathrm{~N}(t-s) x(s) \mathrm{d} s+\mathrm{B} u(t),\\ & \qquad \quad t \in J=[0, b] \backslash\left\{t_{1}, \ldots, t_{m}\right\}, \\ 
			\Delta x\left(t_{k}\right)&=\mathrm{D}_{k} x\left(t_{k}\right)+\mathrm{E}_{k} v_{k}, \quad k=1, \ldots, m, \\ 
			x(0)&= x_{0}   \in \mathbb{X}. 
		\end{split}			
	\end{align}
	
	We introduce a bounded linear operator  $\mathrm{M}: L^{2}(J, \mathbb{U}) \times \mathbb{U}^{m} \rightarrow \mathbb{X}$ defined as
	\begin{align*}
		& \mathrm{M}\left(u(\cdot),\left\{v_{k}\right\}_{k=1}^{m}\right) \\
		& =\mathscr{R}\left(b-t_{m}\right) \sum_{i=1}^{m} \prod_{j=m}^{i+1}\left(\mathrm{I}+\mathrm{D}_{j}\right) \mathscr{R}\left(t_{j}-t_{j-1}\right)\left(\mathrm{I}+\mathrm{D}_{i}\right) \int_{t_{i-1}}^{t_{i}} \mathscr{R}\left(t_{i}-s\right) \mathrm{B} u(s) d s \\
		&\qquad +\int_{t_{m}}^{b} \mathscr{R}(b-s) \mathrm{B} u(s) d s +\mathscr{R}\left(b-t_{m}\right) \sum_{i=2}^{m} \prod_{j=m}^{i}\left(\mathrm{I}+\mathrm{D}_{j}\right) \mathscr{R}\left(t_{j}-t_{j-1}\right) \mathrm{E}_{i-1} v_{i-1}\\& \qquad+\mathscr{R}\left(b-t_{m}\right) \mathrm{E}_{m} v_{m}.
	\end{align*}
The adjoint operator $M^{*}$ has the following form

$$
\begin{aligned}
	\mathrm{M}^{*} \varphi & =\left(\mathrm{B}^{*} \psi(\cdot),\left\{\mathrm{E}_{k}^{*} \psi\left(t_{k}^{+}\right)\right\}_{k=1}^{m}\right), \\
	\mathrm{B}^{*} \psi(t) & = \begin{cases}\mathrm{B}^{*} \mathscr{R}^{*}(b-t) \varphi, & t_{m}<t \leq b \\
		\mathrm{B}^{*} \mathscr{R}^{*}\left(t_{k}-t\right)\left(I+\mathrm{D}_{k}^{*}\right) \prod_{i=k+1}^{m} \mathscr{R}^{*}\left(t_{i}-t_{i-1}\right), \\
		\times\left(\mathrm{I}+\mathrm{D}_{i}^{*}\right) \mathscr{R}^{*}\left(b-t_{m}\right) \varphi, & t_{k-1}<t \leq t_{k}\end{cases} \\
	\mathrm{E}_{k}^{*} \psi\left(t_{k}^{+}\right) & = \begin{cases}\mathrm{E}_{m}^{*} \mathscr{R}^{*}\left(b-t_{m}\right) \varphi, & k=m, \\
		\mathrm{E}_{k}^{*} \prod_{i=k+1}^{m} \mathscr{R}^{*}\left(t_{i}-t_{i-1}\right) \\
		\times\left(\mathrm{I}+\mathrm{D}_{i}^{*}\right) \mathscr{R}^{*}\left(b-t_{m}\right) \varphi, & k=m-1, \ldots, 1\end{cases}
\end{aligned}
$$

The controllability Gramian operator is defined as:
	\begin{align*}
		\mathrm{M}\mathrm{M}^{*}=\Theta_{0}^{t_{m}}+\Gamma_{t_{m}}^{b}+\widetilde{\Theta}_{0}^{t_{m}}+\widetilde{\Gamma}_{t_{m}}^{b},
	\end{align*}
	where operator $M^{*}$ is the adjoint of $M$ 
	and $\Gamma_{t_{m}}^{b}, \widetilde{\Gamma}_{t_{m}}^{b}, \Theta_{0}^{t_{m}}, \widetilde{\Theta}_{0}^{t_{m}}: \mathbb{X} \rightarrow \mathbb{X}$ are non-negative operators and defined as follows:
	\begin{align*}
		\Gamma_{t_{m}}^{b}:= & \int_{t_{m}}^{b} \mathscr{R}(b-s) \mathrm{B} \mathrm{B}^{*} \mathscr{R}(b-s) d s, \quad \widetilde{\Gamma}_{t_{m}}^{b}:=\mathscr{R}\left(b-t_{m}\right) \mathrm{E}_{m} \mathrm{E}_{m}^{*} \mathscr{R}^{*}\left(b-t_{m}\right), 
	\end{align*}
	
	\begin{align*}
		\Theta_{0}^{t_{m}}:= & \mathscr{R}\left(b-t_{m}\right)\\ &\times \sum_{i=1}^{m} \prod_{j=m}^{i+1}\left(\mathrm{I}+\mathrm{D}_{j}\right) \mathscr{R}\left(t_{j}-t_{j-1}\right)\left(\mathrm{I}+\mathrm{D}_{i}\right) \int_{t_{i-1}}^{t_{i}} \mathscr{R}\left(t_{i}-s\right) \mathrm{B}\mathrm{B}^{*} \mathscr{R}^{*}\left(t_{k}-s\right) d s \\
		& \times\left(\mathrm{I}+\mathrm{D}_{i}^{*}\right) \prod_{k=i+1}^{m} \mathscr{R}^{*}\left(t_{k}-t_{k-1}\right)\left(\mathrm{I}+\mathrm{D}_{k}^{*}\right) \mathscr{R}^{*}\left(b-t_{m}\right), \\
		\widetilde{\Theta}_{0}^{t_{m}}:= & T\left(b-t_{m}\right) \sum_{i=2}^{m} \prod_{j=m}^{i}\left(I+\mathrm{D}_{j}\right) T\left(t_{j}-t_{j-1}\right) \mathrm{E}_{i-1} \mathrm{E}_{i-1}^{*} \\
		& \times \prod_{k=i}^{m} T^{*}\left(t_{k}-t_{k-1}\right)\left(I+\mathrm{D}_{k}^{*}\right) T^{*}\left(b-t_{m}\right) .
	\end{align*}
	The operators $\mathscr{R}^{*}$, $\mathrm{B}^{*}$, $\mathrm{D}_{k}^{*}$ and $\mathrm{E}_{m}^{*}$ are the adjoint operators of $\mathscr{R}$, $\mathrm{B}$, $\mathrm{D}_{k}$ and $\mathrm{E}_{k}$ respectively.
	
 Throughout the remainder of this paper, unless stated otherwise, we assume that $\mathbb{X}$
is a separable and reflexive Banach space whose dual $\mathbb{X}^{*}$ is uniformly convex. Now we proceed to prove some results on impulsive resolvent-like operator $ \left(\alpha\mathrm{I}+\left(\Theta_{0}^{t_{m}}+\Gamma_{t_{m}}^{b}+\widetilde{\Theta}_{0}^{t_{m}}+\widetilde{\Gamma}_{t_{m}}^{b}\right)\mathscr{J}\right)^{-1}$.

	\begin{lem}
		For $y \in \mathbb{X}$ and $\alpha>0$, the equation
		\begin{equation}\label{eq2.5}
			\alpha x_{\alpha}+ \left(\Theta_{0}^{t_{m}}+\Gamma_{t_{m}}^{b}+\widetilde{\Theta}_{0}^{t_{m}}+\widetilde{\Gamma}_{t_{m}}^{b}\right)\mathscr{J}\left[x_{\alpha}\right]=\alpha y ,
		\end{equation}
		has a unique solution
		\begin{equation*}
				x_{\alpha}=x_{\alpha}(y)=\alpha\left(\alpha \mathrm{I}+\left(\Theta_{0}^{t_{m}}+\Gamma_{t_{m}}^{b}+\widetilde{\Theta}_{0}^{t_{m}}+\widetilde{\Gamma}_{t_{m}}^{b}\right) \mathscr{J}\right)^{-1}(y) .
		\end{equation*}
		Moreover,
		\begin{equation} \label{eq2.6}
			\left\|x_{\alpha}(y)\right\|_{\mathbb{X}}=\left\|\mathscr{J}\left[x_{\alpha}(y)\right]\right\|_{\mathbb{X}^{*}} \leq\|y\|_{\mathbb{X}} .
		\end{equation}
	\end{lem}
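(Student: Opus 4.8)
The plan is to solve \eqref{eq2.5} by pushing it through the duality map into a monotone operator equation on $\mathbb{X}^{*}$, establishing existence and uniqueness by a surjectivity argument, and then reading off the bound \eqref{eq2.6} from a single duality pairing. Write $\Psi := \Theta_{0}^{t_{m}}+\Gamma_{t_{m}}^{b}+\widetilde{\Theta}_{0}^{t_{m}}+\widetilde{\Gamma}_{t_{m}}^{b} = \mathrm{M}\mathrm{M}^{*}$. The decisive structural fact I would record first is that $\Psi$ is nonnegative: for $\phi \in \mathbb{X}^{*}$ one has $\langle \Psi\phi,\phi\rangle = \langle \mathrm{M}\mathrm{M}^{*}\phi,\phi\rangle = \|\mathrm{M}^{*}\phi\|^{2} \geq 0$. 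Under the standing hypotheses ($\mathbb{X}$ reflexive, $\mathbb{X}^{*}$ uniformly and hence strictly convex), Remark on the duality mapping gives that $\mathscr{J}:\mathbb{X}\to\mathbb{X}^{*}$ is single-valued, bijective, strictly monotone and demicontinuous, with inverse $\mathscr{J}^{-1}=\mathscr{J}_{\mathbb{X}^{*}}$, the duality map of $\mathbb{X}^{*}$.

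Substituting $w=\mathscr{J}[x_{\alpha}]$, so that $x_{\alpha}=\mathscr{J}^{-1}[w]$ and $\mathscr{J}[x_\alpha]=w$, equation \eqref{eq2.5} is equivalent to finding $w\in\mathbb{X}^{*}$ with
\[
\mathcal{G}(w):=\alpha\,\mathscr{J}^{-1}[w]+\Psi w=\alpha y,
\]
where $\mathcal{G}:\mathbb{X}^{*}\to(\mathbb{X}^{*})^{*}=\mathbb{X}$. I would then verify the three hypotheses of the Browder--Minty theorem on the reflexive space $\mathbb{X}^{*}$. Strict monotonicity: $\langle\mathcal{G}(w_{1})-\mathcal{G}(w_{2}),w_{1}-w_{2}\rangle=\alpha\langle\mathscr{J}^{-1}[w_{1}]-\mathscr{J}^{-1}[w_{2}],w_{1}-w_{2}\rangle+\|\mathrm{M}^{*}(w_{1}-w_{2})\|^{2}$, where the first term is strictly positive for $w_{1}\neq w_{2}$ (strict monotonicity of $\mathscr{J}^{-1}$, valid since $\mathbb{X}^{*}$ is strictly convex) and the second is nonnegative. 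Coercivity: $\langle\mathcal{G}(w),w\rangle\geq\alpha\langle\mathscr{J}^{-1}[w],w\rangle=\alpha\|w\|_{\mathbb{X}^{*}}^{2}$. Demicontinuity: $\mathscr{J}^{-1}$ is demicontinuous and $\Psi$ is bounded linear. Browder--Minty then yields that $\mathcal{G}$ is a bijection of $\mathbb{X}^{*}$ onto $\mathbb{X}$, giving a unique $w=w(y)$ and hence a unique $x_{\alpha}=\mathscr{J}^{-1}[w]$. Since $\mathscr{J}[\lambda x]=\lambda\mathscr{J}[x]$ for $\lambda\geq 0$, the map $\alpha\mathrm{I}+\Psi\mathscr{J}$ is positively homogeneous of degree one, so its solution map is as well, which justifies writing $x_{\alpha}(y)=\alpha\bigl(\alpha\mathrm{I}+\Psi\mathscr{J}\bigr)^{-1}(y)$ exactly as in the statement. (A variational alternative is to minimize the strictly convex, coercive, weakly lower semicontinuous functional $\Phi(w)=\tfrac{\alpha}{2}\|w\|_{\mathbb{X}^{*}}^{2}+\tfrac12\|\mathrm{M}^{*}w\|^{2}-\alpha\langle y,w\rangle$ over the reflexive space $\mathbb{X}^{*}$ and identify its Euler equation with $\mathcal{G}(w)=\alpha y$.)

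For the estimate \eqref{eq2.6}, I would pair \eqref{eq2.5} with $\mathscr{J}[x_{\alpha}]\in\mathbb{X}^{*}$ to obtain
\[
\alpha\,\langle x_{\alpha},\mathscr{J}[x_{\alpha}]\rangle+\langle\Psi\mathscr{J}[x_{\alpha}],\mathscr{J}[x_{\alpha}]\rangle=\alpha\,\langle y,\mathscr{J}[x_{\alpha}]\rangle .
\]
Using $\langle x_{\alpha},\mathscr{J}[x_{\alpha}]\rangle=\|x_{\alpha}\|_{\mathbb{X}}^{2}=\|\mathscr{J}[x_{\alpha}]\|_{\mathbb{X}^{*}}^{2}$, discarding the nonnegative middle term $\|\mathrm{M}^{*}\mathscr{J}[x_{\alpha}]\|^{2}$, and bounding the right-hand side by $\alpha\|y\|_{\mathbb{X}}\|\mathscr{J}[x_{\alpha}]\|_{\mathbb{X}^{*}}=\alpha\|y\|_{\mathbb{X}}\|x_{\alpha}\|_{\mathbb{X}}$, one gets $\alpha\|x_{\alpha}\|_{\mathbb{X}}^{2}\leq\alpha\|y\|_{\mathbb{X}}\|x_{\alpha}\|_{\mathbb{X}}$; dividing (the case $x_{\alpha}=0$ being trivial) yields $\|x_{\alpha}\|_{\mathbb{X}}=\|\mathscr{J}[x_{\alpha}]\|_{\mathbb{X}^{*}}\leq\|y\|_{\mathbb{X}}$.

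The step I expect to be the main obstacle is the existence half, and more precisely the functional-analytic bookkeeping that makes the monotone-operator reduction airtight: confirming that under the hypothesis "$\mathbb{X}^{*}$ uniformly convex, $\mathbb{X}$ reflexive" the maps $\mathscr{J}$ and $\mathscr{J}^{-1}$ genuinely carry the strict monotonicity and demicontinuity required by Browder--Minty, and that the pairings between $\mathbb{X}$ and $\mathbb{X}^{*}$ (with $\mathbb{X}=(\mathbb{X}^{*})^{*}$ by reflexivity) are kept consistent throughout. Once $x_{\alpha}$ is known to exist and be unique, the norm bound is entirely routine.
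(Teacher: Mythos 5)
Your proposal is correct, but it is a genuinely different — and far more self-contained — route than what the paper actually writes. The paper's entire proof consists of two observations: that $\Theta_{0}^{t_{m}}$, $\Gamma_{t_{m}}^{b}$, $\widetilde{\Theta}_{0}^{t_{m}}$, $\widetilde{\Gamma}_{t_{m}}^{b}$, and hence $\mathrm{M}\mathrm{M}^{*}$, are symmetric and non-negative, and that $\mathrm{M}\mathrm{M}^{*}$ is linear and bounded, hence continuous; it then defers both the existence/uniqueness claim and the estimate \eqref{eq2.6} entirely to Lemma 2.2 of the cited work of Mahmudov, writing out no monotone-operator argument at all. What you supply — the substitution $w=\mathscr{J}[x_{\alpha}]$ converting \eqref{eq2.5} into a strictly monotone, coercive, demicontinuous equation on the reflexive space $\mathbb{X}^{*}$, the Browder--Minty surjectivity argument, the positive homogeneity of $\alpha\mathrm{I}+\mathrm{M}\mathrm{M}^{*}\mathscr{J}$ justifying the factored form $x_{\alpha}=\alpha\left(\alpha\mathrm{I}+\mathrm{M}\mathrm{M}^{*}\mathscr{J}\right)^{-1}(y)$, and the pairing with $\mathscr{J}[x_{\alpha}]$ that yields \eqref{eq2.6} — is essentially a from-scratch reconstruction of the content of that cited lemma. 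Your version buys verifiability without consulting the reference, and it isolates a step the paper never mentions (the homogeneity that lets $\alpha$ be pulled outside the inverse); the paper's citation buys brevity. One caveat, which you inherit from the paper rather than introduce: bijectivity and strict monotonicity of $\mathscr{J}$ (equivalently, single-valuedness and strict monotonicity of $\mathscr{J}^{-1}$) genuinely require strict convexity of $\mathbb{X}$ itself, not only of $\mathbb{X}^{*}$ — injectivity of $\mathscr{J}$ can fail in a smooth space that is not strictly convex — whereas the paper's Remark attributes all these properties to strict convexity of $\mathbb{X}^{*}$ alone, and the standing assumptions ($\mathbb{X}$ reflexive, $\mathbb{X}^{*}$ uniformly convex) do not literally force $\mathbb{X}$ to be strictly convex. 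This is repaired by the equivalent renorming mentioned in the paper's Remark (i), and since you invoke the paper's Remark exactly as the paper itself does, it is a shared rather than a new gap; within the paper's framework your argument is sound.
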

	\begin{proof}
		As we can check that the operators $\Theta_{0}^{t_{m}}$, $\Gamma_{t_{m}}^{b}$, $\widetilde{\Theta}_{0}^{t_{m}}$, $\widetilde{\Gamma}_{t_{m}}^{b}$ are symmetric and non-negative operators, therefore $\mathrm{M}\mathrm{M}^{*}$ is a symmetric and non-negative operator. Also $\mathrm{M}$ is linear and bounded and this imply that $\mathrm{M}^{*}$ is also linear and bounded and hence $\mathrm{M}\mathrm{M}^{*}$ is linear and bounded opearator. Since every linear and bounded operator defined on a normed space is continuous therefore, $\mathrm{M}\mathrm{M}^{*}$ is a continuous operator. Now we can prove the existence of unique solution of \eqref{eq2.5} and \eqref{eq2.6} in similar way as in lemma 2.2 \cite{mahmudov2003approximate}. 
	\end{proof}
	
	\begin{lem}\label{l10}
		 The operator\newline $\left(\alpha \mathrm{I}+\left(\Theta_{0}^{t_{m}}+\Gamma_{t_{m}}^{b}+\widetilde{\Theta}_{0}^{t_{m}}+\widetilde{\Gamma}_{t_{m}}^{b}\right) \mathscr{J}\right)^{-1}: \mathbb{X} \rightarrow \mathbb{X}, \alpha>0$ is uniformly continuous in every bounded subset of $\mathbb{X}$.	
	\end{lem}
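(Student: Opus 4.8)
The plan is to show that the solution map $y \mapsto x_\alpha(y)$ from the previous lemma is uniformly continuous on bounded sets; since the operator in the statement equals $\tfrac{1}{\alpha}$ times this map (indeed $x_\alpha(y) = \alpha\,(\alpha\mathrm{I} + \Phi\mathscr{J})^{-1}y$, where I abbreviate $\Phi := \Theta_0^{t_m} + \Gamma_{t_m}^b + \widetilde{\Theta}_0^{t_m} + \widetilde{\Gamma}_{t_m}^b = \mathrm{M}\mathrm{M}^*$), uniform continuity transfers immediately. Fix $r > 0$ and take $y_1, y_2 \in \mathbb{X}$ with $\|y_i\|_{\mathbb{X}} \le r$; write $x_i := x_\alpha(y_i)$, so that by \eqref{eq2.6} one has $\|x_i\|_{\mathbb{X}} = \|\mathscr{J}[x_i]\|_{\mathbb{X}^*} \le r$. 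Thus $x_1,x_2$ lie in a fixed ball, which is what will let the modulus constructed below be chosen uniformly.

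First I would subtract the two defining identities \eqref{eq2.5}, obtaining $\alpha(x_1 - x_2) + \Phi(\mathscr{J}[x_1] - \mathscr{J}[x_2]) = \alpha(y_1 - y_2)$, and then pair this with $\mathscr{J}[x_1] - \mathscr{J}[x_2] \in \mathbb{X}^*$. Because $\Phi = \mathrm{M}\mathrm{M}^*$ is non-negative, the term $\langle \Phi(\mathscr{J}[x_1] - \mathscr{J}[x_2]),\, \mathscr{J}[x_1] - \mathscr{J}[x_2]\rangle$ is $\ge 0$ and may be discarded, leaving $\alpha\langle x_1 - x_2,\, \mathscr{J}[x_1] - \mathscr{J}[x_2]\rangle \le \alpha\langle y_1 - y_2,\, \mathscr{J}[x_1] - \mathscr{J}[x_2]\rangle$. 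Dividing by $\alpha$ and bounding the duality pairing by $\|y_1 - y_2\|_{\mathbb{X}}\,\|\mathscr{J}[x_1] - \mathscr{J}[x_2]\|_{\mathbb{X}^*}$ together with $\|\mathscr{J}[x_i]\|_{\mathbb{X}^*} \le r$ yields the basic estimate $\langle x_1 - x_2,\, \mathscr{J}[x_1] - \mathscr{J}[x_2]\rangle \le 2r\,\|y_1 - y_2\|_{\mathbb{X}}$.

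The decisive step is to turn this into a bound on $\|\mathscr{J}[x_1] - \mathscr{J}[x_2]\|_{\mathbb{X}^*}$. Since $\mathbb{X}$ is reflexive, $\mathscr{J}^{-1}$ is exactly the duality mapping of $\mathbb{X}^*$, and the left-hand side above is precisely its monotonicity form evaluated at $f_i := \mathscr{J}[x_i]$, namely $\langle \mathscr{J}^{-1}f_1 - \mathscr{J}^{-1}f_2,\, f_1 - f_2\rangle$. The hypothesis that $\mathbb{X}^*$ is uniformly convex supplies the uniform monotonicity of its duality mapping on bounded sets: for each $r$ there is a continuous, strictly increasing function $\phi_r$ with $\phi_r(0) = 0$ such that $\langle \mathscr{J}^{-1}f_1 - \mathscr{J}^{-1}f_2,\, f_1 - f_2\rangle \ge \phi_r(\|f_1 - f_2\|_{\mathbb{X}^*})$ whenever $\|f_i\|_{\mathbb{X}^*} \le r$. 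Combining this with the previous estimate gives $\phi_r(\|\mathscr{J}[x_1] - \mathscr{J}[x_2]\|_{\mathbb{X}^*}) \le 2r\,\|y_1 - y_2\|_{\mathbb{X}}$, hence $\|\mathscr{J}[x_1] - \mathscr{J}[x_2]\|_{\mathbb{X}^*} \le \phi_r^{-1}(2r\,\|y_1 - y_2\|_{\mathbb{X}})$.

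To conclude, I would return to the subtracted identity written as $\alpha(x_1 - x_2) = \alpha(y_1 - y_2) - \Phi(\mathscr{J}[x_1] - \mathscr{J}[x_2])$, so that $\|x_1 - x_2\|_{\mathbb{X}} \le \|y_1 - y_2\|_{\mathbb{X}} + \tfrac{\|\Phi\|}{\alpha}\,\phi_r^{-1}(2r\,\|y_1 - y_2\|_{\mathbb{X}})$. As $\phi_r^{-1}$ is continuous with $\phi_r^{-1}(0) = 0$, the right-hand side tends to $0$ as $\|y_1 - y_2\|_{\mathbb{X}} \to 0$ at a rate depending only on $r$, $\alpha$ and $\|\Phi\|$, which is exactly uniform continuity of $x_\alpha$, and therefore of $(\alpha\mathrm{I} + \Phi\mathscr{J})^{-1}$, on the ball of radius $r$. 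I expect the main obstacle to be the geometric ingredient: correctly establishing (or citing) the uniform monotonicity estimate for the duality mapping and, in particular, being careful that it is $\mathbb{X}^*$ that is uniformly convex, so the estimate must be applied to the duality mapping $\mathscr{J}^{-1}$ of $\mathbb{X}^*$ rather than to $\mathscr{J}$ itself, with the reflexivity of $\mathbb{X}$ identifying $\mathbb{X}^{**}$ with $\mathbb{X}$ and making this legitimate. The remaining manipulations are routine.
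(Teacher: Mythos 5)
Your proposal is correct, but it cannot be compared line-by-line with the paper's own proof for a simple reason: the paper gives no proof at all. Its entire argument for Lemma~\ref{l10} is the single sentence that the uniform continuity ``can be proved as in \cite{arora2025controllability}'' --- a deferral to an external reference. Your argument supplies exactly the content that this citation hides, and it rests on the two pillars that any such proof must use: (a) non-negativity of your $\Phi := \mathrm{M}\mathrm{M}^{*}$, which lets you discard the term $\langle \Phi(\mathscr{J}[x_1]-\mathscr{J}[x_2]),\,\mathscr{J}[x_1]-\mathscr{J}[x_2]\rangle$ after subtracting the two copies of \eqref{eq2.5}, and (b) uniform monotonicity on bounded sets of the duality mapping of the uniformly convex space $\mathbb{X}^{*}$ (the Pr\"{u}ss / Xu--Roach characterization). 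Where the proofs in this strand of the literature typically argue sequentially or by contradiction, you produce an explicit modulus of continuity $t \mapsto t + \tfrac{\|\Phi\|}{\alpha}\phi_r^{-1}(2rt)$, which is sharper and makes the dependence on $\alpha$, $r$ and $\|\Phi\|$ visible; that is a genuine gain. Two small refinements, neither of which is a gap. First, since the standing assumption is only that $\mathbb{X}^{*}$ is uniformly convex (the paper does not assume $\mathbb{X}$ itself strictly convex), the duality mapping $J_{\mathbb{X}^{*}}:\mathbb{X}^{*}\to 2^{\mathbb{X}}$ may a priori be multivalued, so instead of asserting $\mathscr{J}^{-1}=J_{\mathbb{X}^{*}}$ you should only claim that $x_i$ is a selection, i.e. $x_i \in J_{\mathbb{X}^{*}}\left[\mathscr{J}[x_i]\right]$, which is immediate from $\langle x_i,\mathscr{J}[x_i]\rangle = \|x_i\|_{\mathbb{X}}^2 = \|\mathscr{J}[x_i]\|_{\mathbb{X}^{*}}^2$ together with reflexivity; the uniform monotonicity inequality in uniformly convex spaces holds for arbitrary selections of the duality map, so your estimate survives unchanged. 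Second, $\phi_r^{-1}$ need only be evaluated on $[0,\phi_r(2r)]$, because $\|\mathscr{J}[x_1]-\mathscr{J}[x_2]\|_{\mathbb{X}^{*}} \le 2r$ holds a priori by \eqref{eq2.6}; capping your bound at $2r$ disposes of any concern about the range of $\phi_r$.
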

	\begin{proof}
		 For $\alpha >0$, we can prove the uniform continuity of operator \newline$\left(\alpha \mathrm{I}+\left(\Theta_{0}^{t_{m}}+\Gamma_{t_{m}}^{b}+\widetilde{\Theta}_{0}^{t_{m}}+\widetilde{\Gamma}_{t_{m}}^{b}\right) \mathscr{J}\right)^{-1}$ as proved in \cite{arora2025controllability}.
	\end{proof}
\begin{thm}
	The following statements are equivalent for the linear system \eqref{LP}:
	\begin{enumerate}
		\item[(i)] The operator sum $\Theta_{0}^{t_{m}} + \Gamma_{t_{m}}^{b} + \widetilde{\Theta}_{0}^{t_{m}} + \widetilde{\Gamma}_{t_{m}}^{b}$ is strictly positive.
		
		\item[(ii)] For any $h \in \mathbb{X}$, the sequence $J(x_\alpha(h))$, with $x_\alpha(h) = \alpha(\alpha I + \mathrm{M}\mathrm{M}^{*} J)^{-1}(h)$, converges weakly to zero in $\mathbb{X}^*$ as $\alpha \to 0^{+}$.
		
		\item[(iii)] For any $y \in \mathbb{X}$, the family $x_\alpha(y) = \alpha(\alpha I + (\Theta_{0}^{t_{m}} + \Gamma_{t_{m}}^{b} + \widetilde{\Theta}_{0}^{t_{m}} + \widetilde{\Gamma}_{t_{m}}^{b}) \mathscr{J})^{-1}(y)$ converges strongly to zero in $\mathbb{X}$ as $\alpha \to 0^{+}$.
	\end{enumerate}
\end{thm}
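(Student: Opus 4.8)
The plan is to prove the three statements equivalent through the cyclic chain (i) $\Rightarrow$ (ii) $\Rightarrow$ (iii) $\Rightarrow$ (i). Write $\Upsilon := \Theta_{0}^{t_{m}}+\Gamma_{t_{m}}^{b}+\widetilde{\Theta}_{0}^{t_{m}}+\widetilde{\Gamma}_{t_{m}}^{b} = \mathrm{M}\mathrm{M}^{*}$; by the discussion preceding \eqref{eq2.5} this is a bounded, symmetric, non-negative operator, and since $\mathbb{X}^{*}$ is uniformly (hence strictly) convex, $\mathscr{J}$ is single-valued, bijective and demicontinuous. For fixed $y$ and $\alpha>0$, the element $x_{\alpha}=x_{\alpha}(y)$ is by \eqref{eq2.5} the unique solution of $\alpha x_{\alpha}+\Upsilon\mathscr{J}[x_{\alpha}]=\alpha y$, with $\|x_{\alpha}\|_{\mathbb{X}}=\|\mathscr{J}[x_{\alpha}]\|_{\mathbb{X}^{*}}\leq\|y\|_{\mathbb{X}}$ by \eqref{eq2.6}. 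Pairing the equation with $\mathscr{J}[x_{\alpha}]\in\mathbb{X}^{*}$ and using $\langle x_{\alpha},\mathscr{J}[x_{\alpha}]\rangle=\|x_{\alpha}\|_{\mathbb{X}}^{2}$ yields the master identity
\[
\alpha\|x_{\alpha}\|_{\mathbb{X}}^{2}+\langle\Upsilon\mathscr{J}[x_{\alpha}],\mathscr{J}[x_{\alpha}]\rangle=\alpha\langle y,\mathscr{J}[x_{\alpha}]\rangle,
\]
which drives every step. I will also record that, since $\mathrm{M}^{*}$ maps into the Hilbert space $H:=L^{2}(J;\mathbb{U})\times\mathbb{U}^{m}$, one has $\langle\Upsilon w^{*},w^{*}\rangle=\|\mathrm{M}^{*}w^{*}\|_{H}^{2}$, so the quadratic form $Q(w^{*}):=\langle\Upsilon w^{*},w^{*}\rangle$ is convex and continuous on $\mathbb{X}^{*}$ (hence weakly lower semicontinuous), and $Q(w^{*})=0$ is equivalent to $\Upsilon w^{*}=0$.

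For (i) $\Rightarrow$ (ii), fix $h\in\mathbb{X}$ and set $w_{\alpha}:=\mathscr{J}[x_{\alpha}(h)]$. The bound $\|w_{\alpha}\|_{\mathbb{X}^{*}}\leq\|h\|_{\mathbb{X}}$ confines $\{w_{\alpha}\}$ to a fixed ball of the reflexive space $\mathbb{X}^{*}$, so any sequence $\alpha_{n}\to0^{+}$ admits a weakly convergent subsequence $w_{\alpha_{n}}\rightharpoonup w^{*}$. From the master identity and $\|x_{\alpha}\|_{\mathbb{X}}\leq\|h\|_{\mathbb{X}}$, the non-negative term obeys $\langle\Upsilon w_{\alpha},w_{\alpha}\rangle\leq\alpha\|h\|_{\mathbb{X}}^{2}\to0$; weak lower semicontinuity of $Q$ then gives $Q(w^{*})\leq\liminf Q(w_{\alpha_{n}})=0$, i.e. $\Upsilon w^{*}=0$, and strict positivity (i) forces $w^{*}=0$. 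As every weak sublimit equals $0$ and the net is bounded in a reflexive space, $w_{\alpha}\rightharpoonup0$ in $\mathbb{X}^{*}$, which is precisely (ii).

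For (ii) $\Rightarrow$ (iii), divide the master identity by $\alpha$ and discard the non-negative middle term to get $\|x_{\alpha}(y)\|_{\mathbb{X}}^{2}\leq\langle y,\mathscr{J}[x_{\alpha}(y)]\rangle$. Under (ii), $\mathscr{J}[x_{\alpha}(y)]\rightharpoonup0$ in $\mathbb{X}^{*}$, and since $\mathbb{X}$ is reflexive the pairing against the fixed $y\in\mathbb{X}\cong\mathbb{X}^{**}$ tends to $0$, whence $\|x_{\alpha}(y)\|_{\mathbb{X}}\to0$, giving (iii). For (iii) $\Rightarrow$ (i) I argue by contraposition: if $\Upsilon$ is not strictly positive there is $z^{*}\neq0$ with $\langle\Upsilon z^{*},z^{*}\rangle=0$, hence $\Upsilon z^{*}=0$ by the remark above. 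Using bijectivity of $\mathscr{J}$ put $z:=\mathscr{J}^{-1}[z^{*}]\neq0$; then $\alpha z+\Upsilon\mathscr{J}[z]=\alpha z+\Upsilon z^{*}=\alpha z$, so by uniqueness $x_{\alpha}(z)=z$ for every $\alpha>0$, and $x_{\alpha}(z)\not\to0$, contradicting (iii).

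The algebraic identities and the bound $\|x_{\alpha}\|_{\mathbb{X}}\leq\|y\|_{\mathbb{X}}$ are routine and already supplied by the preceding lemma. The delicate step is (i) $\Rightarrow$ (ii): I must justify weak lower semicontinuity of the quadratic form (most cleanly via its factorization $Q(w^{*})=\|\mathrm{M}^{*}w^{*}\|_{H}^{2}$ through the bounded, hence weak-to-weak continuous, operator $\mathrm{M}^{*}$) and, crucially, upgrade \emph{``every weakly convergent subsequence has limit $0$''} to weak convergence of the whole net as $\alpha\to0^{+}$, which rests on the uniform bound together with reflexivity of $\mathbb{X}^{*}$. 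Care is likewise needed to keep the dualities consistent, since $\Upsilon$ sends $\mathbb{X}^{*}$ into $\mathbb{X}$ while $\mathscr{J}$ sends $\mathbb{X}$ into $\mathbb{X}^{*}$.
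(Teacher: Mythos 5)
Your proposal is correct, and at the structural level it is the paper's own proof: the same cyclic chain (i) $\Rightarrow$ (ii) $\Rightarrow$ (iii) $\Rightarrow$ (i), driven by the same two facts, namely the identity obtained by pairing \eqref{eq2.5} with $\mathscr{J}[x_{\alpha}]$ and the a priori bound \eqref{eq2.6}; your step (ii) $\Rightarrow$ (iii) coincides with the paper's verbatim. Where you genuinely deviate is at the two points where the paper is least explicit, and in both cases your version is the more careful one. In (i) $\Rightarrow$ (ii), the paper names the weak limit as $\bar{x}^{*}=\mathscr{J}(\bar{x})$ via bijectivity of $\mathscr{J}$, pairs \eqref{eq2.5} against this fixed functional, and passes to the limit in the cross term $\langle \mathscr{J}(\bar{x}), \Gamma \mathscr{J}(x_{\alpha})\rangle$ --- a passage that is legitimate only after rewriting it, by symmetry of $\Gamma=\mathrm{M}\mathrm{M}^{*}$, as a pairing of $\mathscr{J}(x_{\alpha})$ against the fixed element $\Gamma\mathscr{J}(\bar{x})$, which the paper leaves silent. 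You instead read off $\langle \Gamma w_{\alpha}, w_{\alpha}\rangle \leq \alpha \|h\|_{\mathbb{X}}^{2} \to 0$ directly from the master identity and conclude by weak lower semicontinuity of the quadratic form $w^{*}\mapsto \|\mathrm{M}^{*}w^{*}\|^{2}$, which requires no identification of the limit and no symmetry interchange. In (iii) $\Rightarrow$ (i), the paper delegates the implication $\langle x^{*},\Gamma x^{*}\rangle = 0 \Rightarrow \Gamma x^{*}=0$ to Lemma 2.1 of \cite{mahmudov2003approximate}, and its displayed justification misplaces the adjoint (it writes $\|\mathrm{M}x^{*}\|^{2}$, although $x^{*}\in\mathbb{X}^{*}$ lies in the domain of $\mathrm{M}^{*}$, not of $\mathrm{M}$); your factorization $\langle \Gamma w^{*}, w^{*}\rangle = \|\mathrm{M}^{*}w^{*}\|^{2}$ through the Hilbert space $L^{2}(J;\mathbb{U})\times\mathbb{U}^{m}$ is the correct, self-contained form of exactly this step. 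The remaining ingredients --- the subsequence-to-family upgrade in (ii), justified by boundedness plus reflexivity, and the fixed-point construction $x_{\alpha}(z)=z$ contradicting (iii) --- are identical in both proofs. Net effect: same theorem, same architecture; your write-up is self-contained and repairs two small gaps, at the modest cost of invoking weak lower semicontinuity of a continuous convex quadratic functional.
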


\begin{proof}
	It is known that $\mathrm{M}\mathrm{M}^{*} = \Theta_{0}^{t_{m}} + \Gamma_{t_{m}}^{b} + \widetilde{\Theta}_{0}^{t_{m}} + \widetilde{\Gamma}_{t_{m}}^{b}$ is a symmetric operator.
	
	\noindent\textbf{(i) $\Rightarrow$ (ii):} Assume that the operator $\Gamma := \mathrm{M}\mathrm{M}^{*}$ is strictly positive. From equation \eqref{eq2.6}, we can extract a weakly convergent subsequence $x_\alpha$ such that there exists $\bar{x}^* \in \mathbb{X}^*$ satisfying
	\[
	\langle \mathscr{J}(x_\alpha), x \rangle \to \langle \bar{x}^*, x \rangle \quad \text{as } \alpha \to 0^{+}, \quad \forall x \in \mathbb{X}.
	\]
	Since $\mathscr{J}$ is bijective, there exists a unique $\bar{x} \in \mathbb{X}$ such that $\bar{x}^* = \mathscr{J}(\bar{x})$. Using equation \eqref{eq2.5}, we have
	\[
	\alpha \langle \mathscr{J}(\bar{x}), x_\alpha \rangle + \langle \mathscr{J}(\bar{x}), \Gamma \mathscr{J}(x_\alpha) \rangle = \alpha \langle \mathscr{J}(\bar{x}), h \rangle.
	\]
	Taking the limit as $\alpha \to 0^{+}$, we obtain
	\[
	\langle \mathscr{J}(\bar{x}), \Gamma \mathscr{J}(\bar{x}) \rangle = 0.
	\]
	By the positivity of $\Gamma$, it follows that $\mathscr{J}(\bar{x}) = 0$ and hence $\bar{x} = 0$. Thus, all weakly convergent subsequences converge to zero, implying that $\mathscr{J}(x_\alpha)$ converges weakly to zero.
	
	\noindent\textbf{(ii) $\Rightarrow$ (iii):} Assume that $\mathscr{J}(x_\alpha) \rightharpoonup 0$ weakly as $\alpha \to 0^{+}$. From equation \eqref{eq2.5}, we write
	\begin{align*}
		\alpha \langle \mathscr{J}(x_\alpha), x_\alpha \rangle + \langle \mathscr{J}(x_\alpha), \Gamma \mathscr{J}(x_\alpha) \rangle &= \alpha \langle \mathscr{J}(x_\alpha), h \rangle, \\
		\alpha \|x_\alpha\|^2 + \langle \mathscr{J}(x_\alpha), \Gamma \mathscr{J}(x_\alpha) \rangle &= \alpha \langle \mathscr{J}(x_\alpha), h \rangle.
	\end{align*}
	Dividing by $\alpha$ and taking the limit yields
	\[
	\lim_{\alpha \to 0^{+}} \|x_\alpha\|^2 = \lim_{\alpha \to 0^{+}} \|\mathscr{J}(x_\alpha)\|^2 \leq \lim_{\alpha \to 0^{+}} \langle \mathscr{J}(x_\alpha), h \rangle = 0.
	\]
	Therefore, $x_\alpha \to 0$ strongly in $\mathbb{X}$.
	
	\noindent\textbf{(iii) $\Rightarrow$ (i):} Suppose for contradiction that $\Gamma$ is not strictly positive. Then there exists $x^* \neq 0$ such that
	\[
	\langle x^*, \Gamma x^* \rangle = 0.
	\]
	By Lemma 2.1 of \cite{mahmudov2003approximate}, this implies
	\[
	\langle x^*, \mathrm{M}^*\mathrm{M}x^* \rangle = \|\mathrm{M} x^*\|^2 = 0 \quad \Rightarrow \quad \mathrm{M}x^* = 0 \Rightarrow \Gamma x^* = 0.
	\]
	Since $\mathscr{J}$ is bijective, there exists $x \in \mathbb{X}$ such that $\mathscr{J}(x) = x^*$. Taking $h = x$ in \eqref{eq2.5}, we get
	\[
	\alpha x + \Gamma \mathscr{J}(x) = \alpha h.
	\]
	Because $\Gamma \mathscr{J}(x) = 0$, we conclude that $x_\alpha(h) = h$ for all $\alpha > 0$. Hence,
	\[
	\lim_{\alpha \to 0^{+}} x_\alpha(h) = h \neq 0,
	\]
	contradicting the assumption that $x_\alpha(h) \to 0$. Therefore, $\Gamma = \mathrm{M}\mathrm{M}^{*}$ must be strictly positive.
\end{proof}

	\begin{rem}
		From above, it is clear that condition (iii) of the above theorem holds if and only if $\|M^{*}\|>0$ for all non-zero $x\in \mathbb{X}$. This means that (iii) holds if and only if the linear system \eqref{LP} is approximately controllable.
	\end{rem}
	
	\begin{thm}
		Let $\mathrm{M}\mathrm{M}^{*}: \mathbb{X}^{*} \rightarrow \mathbb{X}$ be a symmetric and positive operator, and let $h: \mathbb{X} \rightarrow \mathbb{X}$ be a nonlinear mapping. Suppose that for each $\alpha > 0$, the equation
		\begin{equation}\label{eq2.7}
			\alpha x_{\alpha} + \mathrm{M}\mathrm{M}^{*} \mathscr{J}(x_{\alpha}) = \alpha h(x_{\alpha})
		\end{equation}
		has a solution $x_\alpha$, and that the sequence $\{h(x_\alpha)\}$ converges strongly to some $\bar{h} \in \mathbb{X}$ as $\alpha \to 0^+$. Then there exists a subsequence of $\{x_\alpha\}$ that converges strongly to zero in $\mathbb{X}$ as $\alpha \to 0^+$.
	\end{thm}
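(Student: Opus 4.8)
The plan is to run an energy estimate against the duality map, exactly as in the proof of the preceding theorem, and then upgrade the weak information coming from the positivity of $\Gamma:=\mathrm{M}\mathrm{M}^{*}$ to strong convergence by exploiting the assumed strong convergence $h(x_\alpha)\to\bar h$. First I would pair \eqref{eq2.7} with $\mathscr{J}(x_\alpha)\in\mathbb{X}^{*}$ and use $\langle\mathscr{J}(x_\alpha),x_\alpha\rangle=\|x_\alpha\|_{\mathbb{X}}^{2}=\|\mathscr{J}(x_\alpha)\|_{\mathbb{X}^{*}}^{2}$ to obtain the identity
\[
\alpha\|x_\alpha\|_{\mathbb{X}}^{2}+\langle\mathscr{J}(x_\alpha),\Gamma\mathscr{J}(x_\alpha)\rangle=\alpha\langle\mathscr{J}(x_\alpha),h(x_\alpha)\rangle .
\]
Since $\Gamma$ is non-negative, the middle term is $\geq 0$, so dropping it and applying Cauchy--Schwarz gives $\|x_\alpha\|_{\mathbb{X}}\leq\|h(x_\alpha)\|_{\mathbb{X}}$. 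As $\{h(x_\alpha)\}$ converges it is bounded, whence $\{x_\alpha\}$ — and therefore $\{\mathscr{J}(x_\alpha)\}$ — is bounded.

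Next, because $\mathbb{X}$ is reflexive its dual $\mathbb{X}^{*}$ is reflexive as well, so from the bounded family $\{\mathscr{J}(x_\alpha)\}$ I would extract a subsequence with $\mathscr{J}(x_\alpha)\rightharpoonup\xi$ weakly in $\mathbb{X}^{*}$ as $\alpha\to0^{+}$. Rewriting \eqref{eq2.7} as $\Gamma\mathscr{J}(x_\alpha)=\alpha\bigl(h(x_\alpha)-x_\alpha\bigr)$ and using the boundedness of $\{x_\alpha\}$ and $\{h(x_\alpha)\}$ shows $\Gamma\mathscr{J}(x_\alpha)\to0$ strongly in $\mathbb{X}$. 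Since a bounded linear operator is weak-to-weak continuous, $\Gamma\mathscr{J}(x_\alpha)\rightharpoonup\Gamma\xi$, and comparing limits forces $\Gamma\xi=0$, hence $\langle\xi,\Gamma\xi\rangle=0$. Invoking the strict positivity of $\Gamma$ then yields $\xi=0$; that is, $\mathscr{J}(x_\alpha)\rightharpoonup0$ weakly in $\mathbb{X}^{*}$.

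Finally I would return to the energy identity, divide by $\alpha$, and discard the non-negative term $\tfrac1\alpha\langle\mathscr{J}(x_\alpha),\Gamma\mathscr{J}(x_\alpha)\rangle$ to get
\[
\|x_\alpha\|_{\mathbb{X}}^{2}\leq\langle\mathscr{J}(x_\alpha),h(x_\alpha)\rangle=\langle\mathscr{J}(x_\alpha),h(x_\alpha)-\bar h\rangle+\langle\mathscr{J}(x_\alpha),\bar h\rangle .
\]
The first term is bounded by $\|x_\alpha\|_{\mathbb{X}}\,\|h(x_\alpha)-\bar h\|_{\mathbb{X}}\to0$ (a bounded factor times a null sequence), while the second tends to $0$ because $\mathscr{J}(x_\alpha)\rightharpoonup0$ in $\mathbb{X}^{*}$ and $\bar h\in\mathbb{X}$ acts on $\mathbb{X}^{*}$ through the canonical embedding. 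Hence $\|x_\alpha\|_{\mathbb{X}}\to0$ along the subsequence, which is the assertion.

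The main obstacle is the passage from the weak null-convergence of $\mathscr{J}(x_\alpha)$ to the strong convergence of $x_\alpha$ itself: since $\mathscr{J}$ is nonlinear and only demicontinuous, one cannot identify $\xi$ with $\mathscr{J}$ of a weak limit of $x_\alpha$, nor invert the weak statement directly. The coercivity built into the energy identity (keeping $\alpha\|x_\alpha\|_{\mathbb{X}}^{2}$ on the left and discarding the non-negative quadratic form) together with the strong convergence of $h(x_\alpha)$ is what makes the strong conclusion work. I would also stress that strict positivity of $\Gamma$ is essential in identifying $\xi=0$: for a merely non-negative $\Gamma$ (e.g.\ $\Gamma=0$), equation \eqref{eq2.7} reduces to $x_\alpha=h(x_\alpha)\to\bar h$, which need not vanish.
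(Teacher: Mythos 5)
Your proof is correct and takes essentially the same approach as the paper's: the same energy identity obtained by pairing the equation with $\mathscr{J}(x_\alpha)$, the same boundedness estimate $\|x_\alpha\|_{\mathbb{X}}\leq\|h(x_\alpha)\|_{\mathbb{X}}$, the same weak extraction of a subsequence, identification of the weak limit as zero via strict positivity of $\mathrm{M}\mathrm{M}^{*}$, and the identical splitting $\langle\mathscr{J}(x_\alpha),h(x_\alpha)-\bar h\rangle+\langle\mathscr{J}(x_\alpha),\bar h\rangle$ for the strong conclusion. The only (harmless) deviation is in showing the weak limit vanishes --- you observe $\mathrm{M}\mathrm{M}^{*}\mathscr{J}(x_\alpha)=\alpha\left(h(x_\alpha)-x_\alpha\right)\to 0$ strongly and invoke weak-to-weak continuity of the bounded operator, whereas the paper pairs the equation against the weak limit and passes to the limit using symmetry --- but both variants reduce to the same strict-positivity argument.
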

	
	\begin{proof}
		Due to the assumption that $h(x_\alpha) \to \bar{h}$ strongly as $\alpha \to 0^+$, it follows that the sequence $\{x_\alpha\}$ is bounded in $\mathbb{X}$. Indeed, taking norms in \eqref{eq2.7} and recalling that $\mathscr{J}$ is an isometry yields
		\[
		\|x_\alpha\| = \|\mathscr{J}(x_\alpha)\| \leq \|h(x_\alpha)\| \leq C,
		\]
		for some constant $C > 0$ independent of $\alpha$. Hence, by the Banach-Alaoglu theorem, there exists a weakly convergent subsequence (still denoted by $\{x_\alpha\}$) such that
		\[
		\mathscr{J}(x_\alpha) \rightharpoonup \mathscr{J}(\bar{x}_0) \quad \text{in } \mathbb{X}^* \text{ as } \alpha \to 0^+,
		\]
		for some $\bar{x}_0 \in \mathbb{X}$. 
		
		Applying the duality pairing with $\mathscr{J}(\bar{x}_0)$ to both sides of \eqref{eq2.7}, we obtain:
		\[
		\alpha \langle \mathscr{J}(\bar{x}_0), x_\alpha \rangle + \langle \mathscr{J}(\bar{x}_0), \mathrm{M}\mathrm{M}^{*} \mathscr{J}(x_\alpha) \rangle = \alpha \langle \mathscr{J}(\bar{x}_0), h(x_\alpha) \rangle.
		\]
		Taking the limit as $\alpha \to 0^+$ and using the weak convergence and the continuity of the involved operators, we get:
		\[
		\langle \mathscr{J}(\bar{x}_0), \mathrm{M}\mathrm{M}^{*} \mathscr{J}(\bar{x}_0) \rangle = 0.
		\]
		Since $\mathrm{M}\mathrm{M}^{*}$ is positive and symmetric, this implies $\mathscr{J}(\bar{x}_0) = 0$, i.e., $\bar{x}_0 = 0$. Therefore, we conclude that $\mathscr{J}(x_\alpha) \rightharpoonup 0$ in $\mathbb{X}^*$.
		
		To prove strong convergence, we now take the duality pairing of both sides of \eqref{eq2.7} with $\mathscr{J}(x_\alpha)$:
		\[
		\alpha \|x_\alpha\|^2 + \langle \mathscr{J}(x_\alpha), \mathrm{M}\mathrm{M}^{*} \mathscr{J}(x_\alpha) \rangle = \alpha \langle \mathscr{J}(x_\alpha), h(x_\alpha) \rangle.
		\]
		Dividing by $\alpha$ and passing to the limit as $\alpha \to 0^+$, we obtain:
		\[
		\lim_{\alpha \to 0^+} \|x_\alpha\|^2 \leq \lim_{\alpha \to 0^+} \langle \mathscr{J}(x_\alpha), h(x_\alpha) \rangle.
		\]
		We decompose the right-hand side:
		\[
		\langle \mathscr{J}(x_\alpha), h(x_\alpha) \rangle = \langle \mathscr{J}(x_\alpha), h(x_\alpha) - \bar{h} \rangle + \langle \mathscr{J}(x_\alpha), \bar{h} \rangle.
		\]
		The first term vanishes due to the strong convergence of $h(x_\alpha) \to \bar{h}$ and the boundedness of $\mathscr{J}(x_\alpha)$, and the second term tends to zero because $\mathscr{J}(x_\alpha) \rightharpoonup 0$. Thus,
		\[
		\lim_{\alpha \to 0^+} \|x_\alpha\|^2 = 0,
		\]
		which implies strong convergence of a subsequence: $x_\alpha \to 0$ in $\mathbb{X}$ as $\alpha \to 0^+$.
	\end{proof}

\section{approximate controllability of semilinear systems}
In this section, we investigate the existence and approximate controllability of the semilinear system described by \eqref{P}. 

We start by considering the control function $u_{\alpha}$ for $\alpha>0$  as follows:
	\begin{align}\label{eq4.8}
		\begin{split}
			u_{\alpha}(s)=\bigg(\sum_{k=1}^{m}\mathrm{B}^{*}\mathscr{R}^{*}(t_{k}-s)\prod_{i=k+1}^{m} \mathscr{R}^{*}(t_{i}-t_{i-1})\mathscr{R}^{*}(b-t_{m})\chi(t_{k-1},t_{k})\\
			+\mathrm{B}^{*}\mathscr{R}^{*}(b-s)\chi(t_{m},b)\bigg)\widehat{\varphi}_{\alpha},
				\end{split}
		\end{align}
	where 
	
	$$v_{m}=\mathrm{E}_{m}^{*}\mathscr{R}^{*}(b-t_{m})\widehat{\varphi}_{\alpha},\quad\mbox{and}\;\;
			v_{k}=\mathrm{E}_{k}^{*}\prod_{i=k}^{m} \mathscr{R}^{*}(t_{i}-t_{i-1})(\mathrm{I}+\mathrm{D}_{i}^{*})\mathscr{R}^{*}(b-t_{m})\widehat{\varphi}_{\alpha},$$
	with 
	\begin{align*}
		\widehat{\varphi}_{\alpha}=&\mathscr{J}\left[\left(\alpha \mathrm{I}+\left(\Theta_{0}^{t_{m}}+\Gamma_{t_{m}}^{b}+\widetilde{\Theta}_{0}^{t_{m}}+\widetilde{\Gamma}_{t_{m}}^{b}\right) \mathscr{J}\right)^{-1} \sigma\left(x(\cdot)\right)\right],
	\end{align*} 
where
	\begin{align*}
		\sigma\left(x(\cdot)\right)&=h-\mathscr{R}\left(b-t_{m}\right) \prod_{j=m}^{1}(\mathrm{I}+\mathrm{D}_{j}) \mathscr{R}\left(t_{j}-t_{j-1}\right) \psi(0)\\ & \qquad-\int_{t_{m}}^{b} \mathscr{R}(b-s)\left[f(s,\tilde{x}_{s})+f_{1}^{\prime}(s)+f_{2}(s)\right] d s\\
		&\qquad-\mathscr{R}\left(b-t_{m}\right)\sum_{i=1}^{m} \prod_{j=m}^{i+1}\left(\mathrm{I}+\mathrm{D}_{j}\right) \mathscr{R}\left(t_{j}-t_{j-1}\right)\left(\mathrm{I}+\mathrm{D}_{i}\right)\int_{t_{i-1}}^{t_{i}} \mathscr{R}\left(t_{i}-s\right)\\ & \qquad\qquad \qquad \qquad \qquad \qquad \times\left[f(s,\tilde{x}_{s})+f_{1}^{\prime}(s)+f_{2}(s)\right] d s.
	\end{align*}

	In order to establish the subsequent results presented in this work, it is essential to impose the following assumption, which will play a critical role in the development and validation of the forthcoming theorems:
\begin{enumerate}
	\item[(H1)] For each $y \in \mathbb{X}$, consider the family of elements defined by 
	\[
	x_{\alpha}(y) = \alpha \left(\alpha \mathrm{I} + \left(\Theta_{0}^{t_m} + \Gamma_{t_m}^{b} + \widetilde{\Theta}_{0}^{t_m} + \widetilde{\Gamma}_{t_m}^{b}\right) \mathscr{J}\right)^{-1}(y),
	\]
	where $x_{\alpha}(y)$ solves equation \eqref{eq2.7}. We assume that $x_{\alpha}(y)$ converges strongly to zero in $\mathbb{X}$ as $\alpha \to 0^+$.
	
	\item[(H2)] There exists $\alpha_0 \in \rho(\mathrm{A})$ such that the resolvent operator $\mathrm{R}(\alpha_0, \mathrm{A})$ is compact.
	
	\item[(H3)] The nonlinear function $f: J \times \mathfrak{B} \rightarrow \mathbb{X}$ satisfies the following:
	\begin{enumerate}
		\item For almost every $t \in J$, the map $\phi \mapsto f(t, \cdot)$ is continuous on $\mathfrak{B}$.
		\item For every $\phi \in \mathfrak{B}$, the function $t \mapsto f(t, \phi)$ is strongly measurable.
		\item There exists a function $\gamma \in \mathrm{L}^1(J; \mathbb{R}^+)$ such that
		\[
		\|f(t, \phi)\|_{\mathbb{X}} \leq \gamma(t) \quad \text{for almost every } t \in J \text{ and all } \phi \in \mathfrak{B}.
		\]
	\end{enumerate}
\end{enumerate}

	\begin{thm} \label{theorem4.1}
		Assume that hypotheses \textnormal{(H2)}–\textnormal{(H3)} are satisfied. Then for any $h \in \mathbb{X}$ and each fixed $\alpha > 0$, the semilinear system \eqref{P} governed by the control given in \eqref{eq4.8} admits at least one mild solution on the interval $J$.
	\end{thm}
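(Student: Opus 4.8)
The plan is to recast the existence question as a fixed point problem and to invoke the Schauder fixed point theorem, with compactness supplied by hypothesis (H2) and the uniform bound on the nonlinearity supplied by (H3). Fix $\alpha>0$ and $h\in\mathbb{X}$. Since the state enters the control $u_\alpha$ only through $\sigma(x(\cdot))$ and $\widehat\varphi_\alpha$, substituting \eqref{eq4.8} into the mild-solution formula \eqref{2} defines a single operator $\Phi_\alpha$ acting on functions $x:(-\infty,b]\to\mathbb{X}$ with $x_0=\psi$, whose fixed points are precisely the mild solutions of \eqref{P} driven by $u_\alpha$. To handle the infinite delay I would first absorb the history: writing $x=z+\bar\psi$, where $\bar\psi$ extends $\psi$ by $\bar\psi(t)=\mathscr{R}(t)\psi(0)$ on $J$ and $\bar\psi=\psi$ on $(-\infty,0]$, reduces the problem to finding $z$ in the Banach space of piecewise continuous functions on $J$ with $z_0=0$. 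Axiom (A1)(iii) then controls $\|x_s\|_{\mathfrak B}$ by $\Lambda(s)\sup_{0\le\tau\le s}\|z(\tau)\|+Y(s)\|\psi\|_{\mathfrak B}$, so every phase-space quantity is dominated by the $\mathcal{PC}$-norm of $z$.

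Second, I would verify that $\Phi_\alpha$ maps some closed ball $B_r\subset\mathcal{PC}$ into itself. The key a priori bound is that $\widehat\varphi_\alpha$ is uniformly bounded: by \eqref{eq2.6} one has $\|(\alpha I+(\Theta_0^{t_m}+\Gamma_{t_m}^b+\widetilde\Theta_0^{t_m}+\widetilde\Gamma_{t_m}^b)\mathscr{J})^{-1}\sigma(x)\|\le\alpha^{-1}\|\sigma(x)\|$, and $\|\sigma(x)\|$ is bounded uniformly in $x$ because $f$ is dominated by $\gamma\in L^1$ from (H3) while $h,\psi(0),f_1',f_2$ are fixed data. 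Hence $u_\alpha\in L^\infty(J;\mathbb{U})$ with a bound depending only on $\alpha$, $M$, $M_{\mathscr{R}}$ and the data; combined with $\|f(s,x_s)\|\le\gamma(s)$ and $f_1'\in L^1$, every integrand appearing in \eqref{2} is dominated by one fixed $L^1$ function independent of $x$, and since $\alpha$ is held fixed the resulting estimate closes for $r$ sufficiently large. Continuity of $\Phi_\alpha$ on $B_r$ then follows from the continuity and measurability in (H3) together with the dominated convergence theorem, the map $x\mapsto\sigma(x)$ being continuous for the same reason.

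Third, and this is the \textbf{main obstacle}, I would prove that $\Phi_\alpha(B_r)$ is relatively compact in $\mathcal{PC}$ via the Arzel\`a--Ascoli theorem adapted to piecewise continuous functions, which requires (a) equicontinuity on each subinterval $(t_{k-1},t_k]$ and (b) relative compactness of the section $\{(\Phi_\alpha x)(t):x\in B_r\}$ in $\mathbb{X}$ for each fixed $t$. For (b) I would first record that, as is standard and as follows from the construction under conditions (Cd1)--(Cd4), compactness of $\mathrm{R}(\alpha_0,\mathrm{A})$ in (H2) propagates to compactness of the resolvent operator $\mathscr{R}(\tau)$ for every $\tau>0$, together with its norm-continuity on $[\epsilon,b]$. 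Splitting $\int_{t_k}^{t}\mathscr{R}(t-s)g_x(s)\,ds$ into $\int_{t_k}^{t-\epsilon}$ and $\int_{t-\epsilon}^{t}$, the second piece is small uniformly in $x$ by the $L^1$ domination, while on $[t_k,t-\epsilon]$ the integrand involves only compact operators $\mathscr{R}(t-s)$ with $t-s\ge\epsilon$; truncating the $L^1$ dominating function to a bounded one off a set of small measure and approximating by Riemann sums shows this piece lies within $\delta$ of a precompact set, and letting $\epsilon,\delta\to0$ yields relative compactness. Equicontinuity (a) follows from the strong continuity of $\mathscr{R}$, the absolute continuity of $s\mapsto\int_0^s\gamma(\tau)\,d\tau$, and the uniform operator-norm continuity of $\mathscr{R}$ away from $0$, with the jump terms $x(t_k^+)$ treated separately as finite sums of compact contributions. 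The delicate points are the behaviour of the integral near $t=0$ and across the impulse times together with the $L^1$ (rather than $L^\infty$) nature of the bound on $f$, which is precisely why the truncation argument rather than a naive uniform-bound argument is required.

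Finally, having shown that $\Phi_\alpha:B_r\to B_r$ is continuous with relatively compact range, Schauder's theorem produces a fixed point $z^\ast$, and $x^\ast=z^\ast+\bar\psi$ is the desired mild solution on $J$ for the given $\alpha$ and $h$. I expect the compactness step, specifically reconciling the $L^1$ bound on $f$ with the pointwise compactness of the resolvent family, to be where the genuine work lies; the remainder is careful bookkeeping of the lengthy impulsive and neutral terms.
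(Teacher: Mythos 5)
Your proposal follows essentially the same route as the paper's own proof: both recast the problem as a fixed-point equation for the solution map with the feedback control \eqref{eq4.8} substituted in, establish a self-map of a large ball $\mathcal{B}_r \subset \mathcal{PC}(J;\mathbb{X})$ using the bound $\|\widehat{\varphi}_{\alpha}\|_{\mathbb{X}^*} \leq \alpha^{-1}\|\sigma(x(\cdot))\|_{\mathbb{X}} \leq \beta/\alpha$ together with the $L^1$ domination of $f$ from (H3), obtain compactness of the operator from compactness of $\mathscr{R}(t)$ for $t>0$ (deduced from (H2) via Lemma 3.11 of \cite{dos2011existence}) through an Arzel\`a--Ascoli theorem adapted to piecewise continuous functions, and conclude with Schauder. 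Your history-absorbing substitution $x = z + \bar{\psi}$ is a cosmetic variant of the paper's direct use of $\mathcal{Q}_{\psi}$, and your $\epsilon$-splitting and truncation argument for relative compactness of the sections is exactly the standard detail the paper delegates to Step 3 of Theorem 4.1 in \cite{arora2025controllability}.

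One step is dismissed too quickly, however: the continuity of the fixed-point map. The control depends on $x$ through $\widehat{\varphi}_{\alpha} = \mathscr{J}\left[\left(\alpha \mathrm{I} + \mathrm{M}\mathrm{M}^{*}\mathscr{J}\right)^{-1}\sigma(x(\cdot))\right]$, a composition of two \emph{nonlinear} maps, and dominated convergence only yields continuity of $x \mapsto \sigma(x(\cdot))$. The paper's Step 3 additionally invokes Lemma \ref{l10} (uniform continuity of $\left(\alpha \mathrm{I} + \mathrm{M}\mathrm{M}^{*}\mathscr{J}\right)^{-1}$ on bounded sets), and then, because $\mathscr{J}$ is only known to be demicontinuous, it upgrades the resulting weak convergence of $\widehat{\varphi}_{\alpha}^{n}$ to strong convergence of the control terms by exploiting compactness of $\mathscr{R}(t)^{*}$. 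Your plan can be completed along the same lines (or, alternatively, by noting that uniform convexity of $\mathbb{X}^{*}$ makes $\mathbb{X}$ uniformly smooth, so that $\mathscr{J}$ is norm-to-norm continuous on bounded sets), but as written the claim that continuity of $\Phi_{\alpha}$ "follows from (H3) and dominated convergence" skips the genuinely nontrivial part of that step.
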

	
	\begin{proof}
		We begin by defining the set 
		\[
		\mathcal{Q}_{\psi} = \left\{ x \in \mathcal{PC}(J; \mathbb{X}) : x(0) = \psi(0) \right\},
		\]
		equipped with the norm $\|\cdot\|_{\mathcal{PC}}$. Consider the closed ball
		\[
		\mathcal{B}_r = \left\{ x \in \mathcal{Q}_{\psi} : \|x\|_{\mathcal{PC}} \leq r \right\},
		\]
		for a positive constant $r$. This subset of $\mathcal{PC}(J; \mathbb{X})$ will serve as the domain for the operator whose fixed point corresponds to a mild solution of system \eqref{P}.
	
	For $\alpha>0$, we introduce an operator $G_{\alpha}: \mathcal{Q}_{\psi} \rightarrow \mathcal{Q}_{\psi}$ defined as
	$$
	\left(G_{\alpha} x\right)(t)=
	\left\{\begin{array}{l}
		\mathscr{R}(t) \psi(0)+\int_{0}^{t} \mathscr{R}(t-s)\left[\mathrm{B} u_{\alpha}(s)+f(s,\tilde{x}_{s})\right] ds\\ \qquad \qquad +\int_{0}^{t}\mathscr{R}(t-s)\left[f_{1}^{\prime}(s)+f_{2}(s)\right] ds, 0 \leq t \leq t_{1}, \\
		\mathscr{R}\left(t-t_{k}\right) x\left(t_{k}^{+}\right)+\int_{t_{k}}^{t} \mathscr{R}(t-s)\left[\mathrm{B} u_{\alpha}(s)+f(s,\tilde{x}_{s})\right] ds\\ \qquad\qquad+\int_{t_k}^{t}\mathscr{R}(t-s)\left[f_{1}^{\prime}(s)+f_{2}(s)\right] ds, t_{k}<t \leq t_{k+1}, k=1,2, \ldots, m,
	\end{array}\right.
	$$
	where
	$$
	\begin{aligned}
		x\left(t_{k}^{+}\right) & =\prod_{j=k}^{1}\left(\mathcal{I}+\mathrm{D}_{j}\right) \mathscr{R}\left(t_{j}-t_{j-1}\right) \psi(0) \\
		&\qquad +\sum_{i=1}^{k} \prod_{j=k}^{i+1}\left(\mathcal{I}+\mathrm{D}_{j}\right) \mathscr{R}\left(t_{j}-t_{j-1}\right)\left(\mathcal{I}+\mathrm{D}_{i}\right) \int_{t_{i-1}}^{t_{i}} \mathscr{R}\left(t_{i}-s\right) \mathrm{B} u_{\alpha}(s) d s \\
		&\qquad +\sum_{i=1}^{k} \prod_{j=k}^{i+1}\left(\mathcal{I}+\mathrm{D}_{j}\right) \mathscr{R}\left(t_{j}-t_{j-1}\right)\left(\mathcal{I}+\mathrm{D}_{i}\right) \int_{t_{i-1}}^{t_{i}} \mathscr{R}\left(t_{i}-s\right)\left[f(s,\tilde{x}_{s})+f_{1}^{\prime}(s)+f_{2}(s)\right]  d s \\
		& \qquad +\sum_{i=2}^{k} \prod_{j=k}^{i}\left(\mathcal{I}+\mathrm{D}_{j}\right) \mathscr{R}\left(t_{j}-t_{j-1}\right) \mathrm{E}_{i-1} v_{i-1}+\mathrm{E}_{k} v_{k}.
	\end{aligned}
	$$
	
Here, $u_{\alpha}(\cdot)$ is defined as in \eqref{eq4.8}. From the definition of the operator $G_{\alpha}$, it becomes clear that proving the existence of a mild solution to the system \eqref{P} reduces to showing that $G_{\alpha}$ possesses a fixed point. This will be established through the following steps.

\textbf{Step 1:} We begin by showing that $G_{\alpha}\left(\mathcal{B}_{r}\right) \subset \mathcal{B}_{r}$ for some $r = r(\alpha) > 0$. Assume, on the contrary, that this inclusion does not hold. Then, for every $\alpha > 0$ and each $r > 0$, there exists an element $x_r(\cdot) \in \mathcal{B}_r$ such that $\left\|\left(G_{\alpha} x_r\right)(t)\right\|_{\mathbb{X}} > r$ for some $t \in [0, b]$ (where $t$ may vary with $r$). We now proceed to compute
	$$
	\begin{aligned}
		\|\sigma\left(x(\cdot)\right)\|_{\mathbb{X}} & \leq\|h\|_{\mathbb{X}}+\left\|\mathscr{R}\left(b-t_{m}\right)\right\|_{\mathbb{X}} \sum_{j=m}^{1}\left(1+\left\|\mathrm{D}_{j}\right\|_{\mathbb{X}}\right)\left\|\mathscr{R}\left(t_{j}-t_{j-1}\right)\right\|_{\mathbb{X}}\left\|\psi(0)\right\|_{\mathbb{X}} \\
		& \qquad +\left\|\mathscr{R}\left(b-t_{m}\right)\right\|_{\mathbb{X}} \sum_{i=1}^{m} \prod_{j=m}^{i+1}\left(1+\left\|\mathrm{D}_{j}\right\|_{\mathbb{X}}\right)\left\|\mathscr{R}\left(t_{j}-t_{j-1}\right)\right\|_{\mathbb{X}} \\
		& \qquad \qquad \times\left(1+\left\|\mathrm{D}_{i}\right\|_{\mathbb{X}}\right) \int_{t_{i-1}}^{t_{i}}\left\|\mathscr{R}\left(t_{i}-s\right) \left[f(s,\tilde{x}_{s})+f_{1}^{\prime}(s)+f_{2}(s)\right]\right\|_{\mathbb{X}} d s\\   & \qquad +\int_{t_{m}}^{b}\|\mathscr{R}(b-s) \left[f(s,\tilde{x}_{s})+f_{1}^{\prime}(s)+f_{2}(s)\right]\|_{\mathbb{X}} d s, \\
		& \leq\|h\|_{\mathbb{X}}+m M_{\mathscr{R}}^{2}\left(1+M_{\mathrm{D}}\right)\left\|\psi(0)\right\|_{\mathbb{X}}\\ & \qquad+\sum_{k=1}^{m}\left(1+M_{\mathrm{D}}\right)^{k} M_{\mathscr{R}}^{k} \int_{t_{k-1}}^{t_{k}}\left\|\mathscr{R}\left(t_{k}-s\right) \left[f(s,\tilde{x}_{s})+f_{1}^{\prime}(s)+f_{2}(s)\right]\right\|_{\mathbb{X}} d s \\
		& \qquad+\int_{t_{m}}^{b}\|\mathscr{R}(b-s) \left[f(s,\tilde{x}_{s})+f_{1}^{\prime}(s)+f_{2}(s)\right]\|_{\mathbb{X}} d s, \\
		& \leq\|h\|_{\mathbb{X}}+m M_{\mathscr{R}}^{2}\left(1+M_{\mathrm{D}}\right)\left\|\psi(0)\right\|_{\mathbb{X}}\\ & \qquad +\sum_{k=0}^{m}\left(1+M_{\mathrm{D}}\right)^{k} M_{\mathscr{R}}^{k+1} \left(\|\gamma\|_{L^{1}\left(J ; \mathbb{R}^{+}\right)}+\left\|f_{1}^{\prime}+f_{2}\right\|_{L^{1}(J ; \mathbb{X})}\right) , \\
		& =\beta,
	\end{aligned}
	$$
	%where
	%$$
	%M_{1}=\|h\|_{\mathbb{X}}+m M_{\mathscr{R}}^{2}\left(1+M_{\mathrm{D}}\right)\left\|\psi(0)\right\|_{\mathbb{X}}, \quad M_{2}=\sum_{k=0}^{m}\left(1+M_{\mathrm{D}}\right)^{k} M_{\mathscr{R}}^{k+1}\left(\|\gamma\|_{L^{1}\left(J ; \mathbb{R}^{+}\right)}+\left\|f_{1}^{\prime}+f_{2}\right\|_{L^{1}(J ; \mathbb{X})}\right). 
	%$$
	
	Then, we have
	\begin{align}\label{eq4.9}
		\left \|u_{\alpha}(s)\right\|_{\mathbb{U}} &=\bigg\|\bigg(\sum_{k=1}^{m} \mathrm{B}^{*} \mathscr{R}^{*}\left(t_{k}-s\right) \prod_{i=k+1}^{m} \mathscr{R}^{*}\left(t_{i}-t_{i-1}\right)\nonumber \\	& \qquad\times \mathscr{R}^{*}\left(b-t_{m}\right) \chi_{\left(t_{k-1}, t_{k}\right)}+\mathrm{B}^{*} \mathscr{R}^{*}(b-s) \chi_{\left(t_{m}, b\right)}\bigg) \widehat{\varphi}_{\alpha} \bigg\|_{\mathbb{U}},\nonumber \\
		& \leq \tilde{M}\left\|\mathscr{J}\left[\left(\alpha \mathrm{I}+\left(\Theta_{0}^{t_{m}}+\Gamma_{t_{m}}^{b}+\widetilde{\Theta}_{0}^{t_{m}}+\widetilde{\Gamma}_{t_{m}}^{b}\right) \mathscr{J}\right)^{-1}\sigma\left(x(\cdot)\right)\right]\right\|_{\mathbb{X}},\nonumber\\
		& \leq \frac{\tilde{M}}{\alpha} \|\sigma\left(x(\cdot)\right)\|_{\mathbb{X}},\nonumber\\
		& \leq \frac{\tilde{M} \beta}{\alpha}, 
	\end{align}
	where $\tilde{M}=M_{\mathrm{B}} \sum_{k=1}^{m+1} M_{\mathscr{R}}^{k}.$ 	Considering $t \in\left[0, t_{1}\right)$, and applying the estimate \eqref{eq4.9}, 
	
	\begin{align}\label{eq4.10}
		\left\|\left(G_{\alpha} x_{r}\right)(t)\right\|_{\mathbb{X}} & \leq M_{\mathscr{R}}\left\|\psi(0)\right\|_{\mathbb{X}}+M_{\mathscr{R}} M_{\mathrm{B}} \int_{0}^{t}\left\|u_{\alpha}(s)\right\|_{\mathbb{U}} d s+M_{\mathscr{R}}\left(\|\gamma\|_{L^{1}\left(J ; \mathbb{R}^{+}\right)}+\left\|f_{1}^{\prime}+f_{2}\right\|_{L^{1}(J ; \mathbb{X})}\right), \nonumber \\
		& \leq M_{\mathbb{X}}\left\|\psi(0)\right\|_{\mathbb{X}}+\frac{\tilde{M} \beta t_{1}}{\alpha}+M_{\mathscr{R}} \left(\|\gamma\|_{L^{1}\left(J ; \mathbb{R}^{+}\right)}+\left\|f_{1}^{\prime}+f_{2}\right\|_{L^{1}(J ; \mathbb{X})}\right),\nonumber \\
		& \leq M_{\mathscr{R}}\left\|\psi(0)\right\|_{\mathbb{X}}+\frac{\tilde{M} \beta b}{\alpha}+M_{\mathscr{R}} \left(\|\gamma\|_{L^{1}\left(J ; \mathbb{R}^{+}\right)}+\left\|f_{1}^{\prime}+f_{2}\right\|_{L^{1}(J ; \mathbb{X})}\right). 
	\end{align}

	For $t \in\left[t_{k}, t_{k+1}\right].$ for $k=1, \ldots, m$, we have
	
	$$
	\begin{aligned}
		\left\|\left(G_{\alpha} x_{r}\right)(t)\right\|_{\mathbb{X}} & \leq M_{\mathscr{R}}\left\|x_{r}\left(t_{k}^{+}\right)\right\|_{\mathbb{X}}+\frac{\tilde{M} \gamma}{\alpha}+M_{\mathscr{R}}\left(\|\gamma\|_{L^{1}\left(J ; \mathbb{R}^{+}\right)}+\left\|f_{1}^{\prime}+f_{2}\right\|_{L^{1}(J ; \mathbb{X})}\right),  \\
		& \leq M_{\mathscr{R}}\left\|x_{r}\left(t_{k}^{+}\right)\right\|_{\mathbb{X}}+\frac{\tilde{M} \gamma b}{\alpha}+M_{\mathscr{R}} \left(\|\gamma\|_{L^{1}\left(J ; \mathbb{R}^{+}\right)}+\left\|f_{1}^{\prime}+f_{2}\right\|_{L^{1}(J ; \mathbb{X})}\right), 
	\end{aligned}
	$$
	
	where
	
	$$
	\begin{aligned}
		\left\|x_{r}\left(t_{k}^{+}\right)\right\|_{\mathbb{X}} &\leq\left(1+M_{\mathrm{D}}\right)^{k} M_{\mathscr{R}}^{k}\left\|\psi(0)\right\|_{\mathbb{X}}+\sum_{p=1}^{k}\left(1+M_{\mathrm{D}}\right)^{p} M_{\mathscr{R}}^{p} M_{\mathrm{B}} b\left\|u_{\alpha}\right\|_{\mathbb{X}} \\
		 & \qquad+\sum_{p=1}^{k}\left(1+M_{\mathrm{D}}\right)^{p} M_{\mathscr{R}}^{p} \left(\|\gamma\|_{L^{1}\left(J ; \mathbb{R}^{+}\right)}+\left\|f_{1}^{\prime}+f_{2}\right\|_{L^{1}(J ; \mathbb{X})}\right) \\& \qquad +\sum_{p=1}^{k-1}\left(1+M_{\mathrm{D}}\right)^{p} M_{\mathscr{R}} M_{\mathrm{E}} M_{V}+M_{\mathrm{E}} M_{V}, \\
		&\leq \left(1+M_{\mathrm{D}}\right)^{k} M_{\mathscr{R}}^{k}\left\|\psi(0)\right\|_{\mathbb{X}}+\sum_{p=1}^{k}\left(1+M_{\mathrm{D}}\right)^{p} M_{\mathscr{R}}^{p} M_{\mathrm{B}} b \frac{\tilde{M} \gamma}{\alpha} \\
		& \qquad+ \sum_{p=1}^{k}\left(1+M_{\mathrm{D}}\right)^{p} M_{\mathscr{R}}^{p} \left(\|\gamma\|_{L^{1}\left(J ; \mathbb{R}^{+}\right)}+\left\|f_{1}^{\prime}+f_{2}\right\|_{L^{1}(J ; \mathbb{X})}\right)\\ & \qquad +\sum_{p=1}^{k-1}\left(1+M_{\mathrm{D}}\right)^{p} M_{\mathscr{R}} M_{\mathrm{E}} M_{V}+M_{\mathrm{E}} M_{V}.
	\end{aligned}
	$$
	As a result, we get
	\begin{align}\label{eq4.11}
		\left\|\left(G_{\alpha} x_{r}\right)(t)\right\|_{\mathbb{X}} &\leq\left(1+M_{\mathrm{D}}\right)^{k} M_{\mathscr{R}}^{k+1}\left\|\psi(0)\right\|_{\mathbb{X}}+\sum_{p=1}^{k}\left(1+M_{\mathrm{D}}\right)^{p} M_{\mathscr{R}}^{p+1} M_{\mathrm{B}} b \frac{\tilde{M} \gamma}{\alpha} \nonumber \\
		& \qquad +\sum_{p=1}^{k}\left(1+M_{\mathrm{D}}\right)^{p} M_{\mathscr{R}}^{p+1} b\left(\|\gamma\|_{L^{1}\left(J ; \mathbb{R}^{+}\right)}+\left\|f_{1}^{\prime}+f_{2}\right\|_{L^{1}(J ; \mathbb{X})}\right) \\ & \qquad +\sum_{p=1}^{k-1}\left(1+M_{\mathrm{D}}\right)^{p} M_{\mathscr{R}}^{2} M_{\mathrm{E}} M_{V}+M_{\mathscr{R}} M_{\mathrm{E}} M_{V} \nonumber \\
		&\qquad +\frac{\tilde{M} \gamma b}{\alpha}+M_{\mathscr{R}} \left(\|\gamma\|_{L^{1}\left(J ; \mathbb{R}^{+}\right)}+\left\|f_{1}^{\prime}+f_{2}\right\|_{L^{1}(J ; \mathbb{X})}\right). 
	\end{align}
	%where
	%
	%$$
	%\begin{aligned}
	%	M_{3}= & \left(1+M_{\mathrm{D}}\right)^{k} M_{\mathscr{R}}^{k+1}\left\|\psi(0)\right\|_{\mathbb{X}}+\sum_{p=1}^{k}\left(1+M_{\mathrm{D}}\right)^{p} M_{\mathscr{R}}^{p+1} M_{\mathrm{B}} b \frac{\tilde{M} M_{1}}{\alpha} \\
	%	+ & \sum_{p=1}^{k-1}\left(1+M_{\mathrm{D}}\right)^{p} M_{\mathscr{R}}^{2} M_{\mathrm{E}} M_{V}+M_{\mathscr{R}} M_{\mathrm{E}} M_{V}+\frac{\tilde{M} M_{1} b}{\alpha} \\
	%	& M_{4}=\sum_{p=1}^{k}\left(1+M_{\mathrm{D}}\right)^{p} M_{\mathscr{R}}^{p+1} M_{\mathrm{B}} b \frac{\tilde{M} M_{2}}{\alpha}+\frac{\tilde{M} M_{2} b}{\alpha} \\
	%	& +\sum_{p=1}^{k}\left(1+M_{\mathrm{D}}\right)^{p} M_{\mathscr{R}}^{p+1} \left(\|\gamma\|_{L^{1}\left(J ; \mathbb{R}^{+}\right)}+\left\|f_{1}^{\prime}+f_{2}\right\|_{L^{1}(J ; \mathbb{X})}\right)+M_{\mathscr{R}} \left(\|\gamma\|_{L^{1}\left(J ; \mathbb{R}^{+}\right)}+\left\|f_{1}^{\prime}+f_{2}\right\|_{L^{1}(J ; \mathbb{X})}\right) .
	%\end{aligned}
	%$$
	From the inequalities \eqref{eq4.10} and \eqref{eq4.11}, it is clear that for every $\alpha >0$, there is a large $r=r(\alpha)>0$ which gives $G_{\alpha}\left(\mathcal{B}_{r}\right) \subset \mathcal{B}_{r}$.
	
\textbf{Step 2:} We now show that the operator $G_{\alpha}$ is compact. This will be done by applying the infinite-dimensional version of the generalized Arzelà-Ascoli theorem (see Theorem 2.1 in \cite{wei2006nonlinear}). According to this result, the compactness of $G_{\alpha}$ can be concluded if the following conditions are satisfied:

\begin{enumerate}
	\item The image $G_{\alpha}(\mathcal{B}_{r})$ is uniformly bounded, which has already been addressed in Step 1.
	\item For any $x \in \mathcal{B}_{r}$, the mapping $t \mapsto \left(G_{\alpha} x\right)(t)$ is equicontinuous on each open subinterval $\left(t_{j}, t_{j+1}\right)$, where $j=0,1,\ldots,m$.
	\item The sets 
	\[
	W(t) = \left\{\left(G_{\alpha} x\right)(t) : x \in \mathcal{B}_{r},\ t \in J \setminus \{t_1, \ldots, t_n\} \right\},
	\]
	\[
	W\left(t_j^+\right) = \left\{ \left(G_{\alpha} x\right)(t_j^+) : x \in \mathcal{B}_{r} \right\}, \quad 
	W\left(t_j^-\right) = \left\{ \left(G_{\alpha} x\right)(t_j^-) : x \in \mathcal{B}_{r} \right\}, 
	\]
	for each $j = 1, \ldots, n$, are relatively compact in $\mathbb{X}$.
\end{enumerate}

To begin, we verify the equicontinuity of the family $\left\{ \left(G_{\alpha} x\right)(t) : x \in \mathcal{B}_{r} \right\}$ on the interval $(t_j, t_{j+1})$ for each $j = 0,1, \ldots, n$. Take two points $a_1, a_2 \in (0, t_1)$ such that $a_1 < a_2$, and let $x \in \mathcal{B}_r$. We now evaluate the expression:
	\begin{align}\label{eq4.12}
		&\left\|\left(G_{\alpha} x\right)\left(a_{2}\right)-\left(G_{\alpha} x\right)\left(a_{1}\right)\right\|_{\mathbb{X}}\nonumber\\ & \qquad \leq\left\|\mathscr{R}\left(a_{2}\right)-\mathscr{R}\left(a_{1}\right)\right\|_{L(\mathbb{X})}\|\psi(0)\|_{\mathbb{X}} +\left\|\int_{a_{1}}^{a_{2}} \mathscr{R}\left(a_{2}-s\right) \mathrm{B} u_{\alpha}(s) d s\right\|_{\mathbb{X}} \nonumber\\
		&\qquad \qquad +\left\|\int_{a_{1}}^{a_{2}} \mathscr{R}\left(a_{2}-s\right) \left[f(s,\tilde{x}_{s})+f_{1}^{\prime}(s)+f_{2}(s)\right] d s\right\|_{\mathbb{X}}\nonumber \\
		&\qquad \qquad +\left\|\int_{0}^{a_{1}}\left(\mathscr{R}\left(a_{2}-s\right)-\mathscr{R}\left(a_{1}-s\right)\right) \mathrm{B} u_{\alpha}(s) d s\right\|_{\mathbb{X}} \nonumber\\
		& \qquad \qquad +\left\|\int_{0}^{a_{1}}\left(\mathscr{R}\left(a_{2}-s\right)-\mathscr{R}\left(a_{1}-s\right)\right) \left[f(s,\tilde{x}_{s})+f_{1}^{\prime}(s)+f_{2}(s)\right] d s\right\|_{\mathbb{X}}, \nonumber \\
		& \qquad \leq\left\|\mathscr{R}\left(a_{2}\right)-\mathscr{R}\left(a_{1}\right)\right\|_{L(\mathbb{X})}\|\psi(0)\|_{\mathbb{X}} +\frac{M_{\mathscr{R}} M_{\mathrm{B}} \tilde{M} \beta }{\alpha}\left(a_{2}-a_{1}\right)\nonumber\\
		& \qquad\qquad+ M_{\mathscr{R}}\int_{a_{1}}^{a_{2}} \left\|\left[f(s,\tilde{x}_{s})+f_{1}^{\prime}(s)+f_{2}(s)\right]\right\|_{\mathbb{X}} d s \nonumber\\ 
		& \qquad\qquad+\sup_{t \in[0,a_1]} \left\|\mathscr{R}\left(a_{2}-s\right)-\mathscr{R}\left(a_{1}-s\right)\right\|_{L(\mathbb{X})}\int_{0}^{a_{1}} \left\|\left[\mathrm{B} u_{\alpha}(s)\right]\right\|_{\mathbb{X}} d s\nonumber\\ 
		& \qquad\qquad+\sup_{t \in[0,a_1]} \left\|\mathscr{R}\left(a_{2}-s\right)-\mathscr{R}\left(a_{1}-s\right)\right\|_{L(\mathbb{X})}\int_{0}^{a_{1}} \left\|\left[f(s,\tilde{x}_{s})+f_{1}^{\prime}(s)+f_{2}(s)\right]\right\|_{\mathbb{X}} d s.
	\end{align}

Similarly, for any $a_1, a_2 \in (t_k, t_{k+1})$ with $k = 1, \ldots, p$ and $a_1 < a_2$, and for each $x \in \mathcal{B}_r$, we can carry out the following estimation:	
	$$
	\begin{aligned}
		&\left\|\left(G_{\alpha} x\right)\left(a_{2}\right)-\left(G_{\alpha} x\right)\left(a_{1}\right)\right\|_{\mathbb{X}}\\  &\qquad \qquad \leq\left\|\mathscr{R}\left(a_{2}-t_{k}\right)-\mathscr{R}\left(a_{1}-t_{k}\right)\right\|_{L(\mathbb{X})}\left\|x\left(t_{k}^{+}\right)\right\|_{\mathbb{X}}  +\left\|\int_{a_{1}}^{a_{2}} \mathscr{R}\left(a_{2}-s\right) \mathrm{B} u_{\alpha}(s) d s\right\|_{\mathbb{X}} \\
		&  \qquad \qquad\qquad +\left\|\int_{a_{1}}^{a_{2}} \mathscr{R}\left(a_{2}-s\right) \left[f(s,\tilde{x}_{s})+f_{1}^{\prime}(s)+f_{2}(s)\right] d s\right\|_{\mathbb{X}} \\
		& \qquad \qquad \qquad +\left\|\int_{t_{k}}^{a_{1}}\left(\mathscr{R}\left(a_{2}-s\right)-\mathscr{R}\left(a_{1}-s\right)\right) \mathrm{B} u_{\alpha}(s) d s\right\|_{\mathbb{X}} \\
		&\qquad \qquad \qquad +\left\|\int_{t_{k}}^{a_{1}}\left(\mathscr{R}\left(a_{2}-s\right)-\mathscr{R}\left(a_{1}-s\right)\right) \left[f(s,\tilde{x}_{s})+f_{1}^{\prime}(s)+f_{2}(s)\right] d s\right\|_{\mathbb{X}} .
	\end{aligned}
	$$
For the above inequality $\left\|x\left(t_{k}^{+}\right)\right\|_{\mathbb{X}}$ can be estimated as
	$$
	\begin{aligned}
		 \left\|x\left(t_{k}^{+}\right)\right\|_{\mathbb{X}} &\leq\left(1+M_{\mathrm{D}}\right)^{k} M_{\mathscr{R}}^{k}\left\|\psi(0)\right\|_{\mathbb{X}}+\sum_{m=1}^{k}\left(1+M_{\mathrm{D}}\right)^{m} M_{\mathscr{R}}^{m} M_{\mathrm{B}} b \frac{\tilde{M} \beta}{\alpha} \\
		 &\qquad+ \sum_{m=1}^{k}\left(1+M_{\mathrm{D}}\right)^{m} M_{\mathscr{R}}^{m} b\left(\|\gamma\|_{L^{1}\left(J ; \mathbb{R}^{+}\right)}+\left\|f_{1}^{\prime}+f_{2}\right\|_{L^{1}(J ; \mathbb{X})}\right)\\ & \qquad +\sum_{m=1}^{k-1}\left(1+M_{\mathrm{D}}\right)^{m} M_{\mathscr{R}} M_{\mathrm{E}} M_{V}+M_{\mathrm{E}} M_{V}= \Upsilon.
	\end{aligned}
	$$
	Then, we obtain
	
	\begin{align} \label{eq4.13}
		&\left\|\left(G_{\alpha} x\right)\left(a_{2}\right)-\left(G_{\alpha} x\right)\left(a_{1}\right)\right\|_{\mathbb{X}}\nonumber\\ & \qquad \leq\left\|\mathscr{R}\left(a_{2}-t_{k}\right)-\mathscr{R}\left(a_{1}-t_{k}\right)\right\|_{L(\mathbb{X})} \Upsilon \nonumber\\
		& \qquad\qquad +\frac{M_{\mathscr{R}} M_{\mathrm{B}} \tilde{M} \beta}{\alpha}\left(a_{2}-a_{1}\right)+M_{\mathscr{R}}\int_{a_{1}}^{a_{2}} \left\|\left[f(s,\tilde{x}_{s})+f_{1}^{\prime}(s)+f_{2}(s)\right]\right\|_{\mathbb{X}} d s \nonumber \\
		&\qquad\qquad +\sup_{t \in[0,a_1]} \left\|\mathscr{R}\left(a_{2}-s\right)-\mathscr{R}\left(a_{1}-s\right)\right\|_{L(\mathbb{X})}\int_{0}^{a_{1}} \left\|\left[\mathrm{B} u_{\alpha}(s)\right]\right\|_{\mathbb{X}} d s\nonumber\\ 
		&\qquad\qquad +\sup_{t \in[t_k,a_1]} \left\|\mathscr{R}\left(a_{2}-s\right)-\mathscr{R}\left(a_{1}-s\right)\right\|_{L(\mathbb{X})}\int_{t_k}^{a_{1}} \left\|\left[f(s,\tilde{x}_{s})+f_{1}^{\prime}(s)+f_{2}(s)\right]\right\|_{\mathbb{X}} d s.
	\end{align}
	
	Since $\mathscr{R}(t)$ is compact for every $t > 0$ (as established in Lemma 3.11 of \cite{dos2011existence}) and is also uniformly continuous on the interval $[0, b]$, it follows that the resolvent operator $\mathscr{R}(t)$ regularizes solutions as $t \to 0^{+}$. These properties imply that the right-hand sides of equations \eqref{eq4.12} and \eqref{eq4.13} tend to zero as $|a_2 - a_1| \to 0$. Hence, the family $\{ (G_{\alpha} x)(t) : x \in \mathcal{B}_{r} \}$ is equicontinuous for each $t \in (t_k, t_{k+1})$, where $k = 0, 1, \ldots, p$.
	
	Next, we verify that for each $t \in J$, the set
	\[
	W(t) = \left\{ (G_{\alpha} x)(t) : x \in \mathcal{B}_{r} \right\}
	\]
	is relatively compact in $\mathbb{X}$. This follows from the compactness of the operator $\mathscr{R}(t)$ for $t \geq 0$ (for further details, see Step 3 of Theorem 4.1 in \cite{arora2025controllability}). Consequently, the set $W(t)$ is relatively compact for all $t \in J$.
	
	By satisfying the conditions of the generalized Arzelà-Ascoli theorem, we conclude that the operator $G_{\alpha}$ is compact.
	
	\textbf{Step 3:} To establish the continuity of $G_{\alpha}$, let $\{x^n\}_{n=1}^{\infty} \subseteq \mathcal{B}_r$ be a sequence converging to $x$ in $\mathcal{B}_r$, that is,
	\begin{equation*}
		\lim _{n \rightarrow \infty}\left\|x^{n}-x\right\|_{\mathcal{PC}(J;\mathbb{X})}=0.
	\end{equation*}
	Using the axiom (A1), we estimate
	\begin{align*}
		\left\|\tilde{x}_{t}^{n}-\tilde{x}_{t}\right\|_{\mathcal{B}}&\le \Lambda(t)\sup_{0\le s\le t}\left\|x^{n}(s)-x(s)\right\|_{\mathbb{X}}\\
		&\le H_{1}\left\|x^{n}-x\right\|_{\mathcal{PC}(J;\mathbb{X})} \rightarrow 0 \text{ as } n\rightarrow \infty, \text{ for each} t\in J,
	\end{align*}
	where $\sup_{0\le s\le b} |\Lambda(t)| \le H_{1}$. 
	From the above convergence with Assumption $(H3)$ and Lebesgue's dominant convergence theorem, we deduce that 
	\begin{align*}
		\left\|\sigma(x^{n}(\cdot))-\sigma(x(\cdot))\right\|_{\mathbb{X}}&\le \left\|\mathscr{R}\left(b-t_{p}\right)\right\|_{\mathbb{X}} \sum_{i=1}^{m} \prod_{j=m}^{i+1}\left(1+\left\|\mathrm{D}_{j}\right\|_{\mathbb{X}}\right)\left\|\mathscr{R}\left(t_{j}-t_{j-1}\right)\right\|_{\mathbb{X}} \\
		&\qquad \qquad \times\left(1+\left\|\mathrm{\mathrm{D}}_{i}\right\|_{\mathbb{X}}\right) \int_{t_{i-1}}^{t_{i}}\left\|\mathscr{R}\left(t_{i}-s\right)\left(f\left(s, \tilde{x}^{n}_{s}\right)-f(s, \tilde{x}_{s})\right)\right\|_{\mathbb{X}} d s \\
		&\qquad +\int_{t_{m}}^{b}\left\|\mathscr{R}(b-s)\left(f\left(s,\tilde{x}^{n}_{s}\right)-f(s, \tilde{x}_{s})\right)\right\|_{\mathbb{X}} d s, \\
		& \leq \sum_{k=0}^{m}\left(1+M_{\mathrm{D}}\right)^{k} M_{\mathscr{R}}^{k+1} \int_{t_{k-1}}^{t_{k}}\left\|f\left(s,\tilde{x}^{n}_{s}\right)-f(s, \tilde{x}_{s})\right\|_{\mathbb{X}} d s \\
		& \rightarrow 0, n \rightarrow \infty .
	\end{align*}
	From the lemma \ref{l10},it follows that the maping $\left(\alpha \mathrm{I}+\left(\Theta_{0}^{t_{m}}+\Gamma_{t_{m}}^{b}+\widetilde{\Theta}_{0}^{t_{m}}+\widetilde{\Gamma}_{t_{m}}^{b}\right) \mathscr{J}\right)^{-1} : \mathbb{X} \rightarrow \mathbb{X}$ is uniformly continuous on every bounded subset of $\mathbb{X}$. Thus we have \begin{align*}
		&\left(\alpha \mathrm{I}+\left(\Theta_{0}^{t_{m}}+\Gamma_{t_{m}}^{b}+\widetilde{\Theta}_{0}^{t_{m}}+\widetilde{\Gamma}_{t_{m}}^{b}\right) \mathscr{J}\right)^{-1}\sigma (x^{n}(\cdot))\rightarrow \\ & \qquad\qquad \qquad\left(\alpha \mathrm{I}+\left(\Theta_{0}^{t_{m}}+\Gamma_{t_{m}}^{b}+\widetilde{\Theta}_{0}^{t_{m}}+\widetilde{\Gamma}_{t_{m}}^{b}\right) \mathscr{J}\right)^{-1}\sigma (x(\cdot))
	\end{align*}
	in $\mathbb{X}$ as $n\rightarrow \infty$.
	
	As the mapping $\mathscr{J}: \mathbb{X}\mapsto \mathbb{X}^{*}$ is demicontinuous, it follows that
	\begin{align*}
		&\mathscr{J}\left[\left(\alpha \mathrm{I}+\left(\Theta_{0}^{t_{m}}+\Gamma_{t_{m}}^{b}+\widetilde{\Theta}_{0}^{t_{m}}+\widetilde{\Gamma}_{t_{m}}^{b}\right) \mathscr{J}\right)^{-1}\sigma (x^{n}(\cdot))\right]\rightharpoonup\\ & \qquad\qquad \qquad \mathscr{J}\left[\left(\alpha \mathrm{I}+\left(\Theta_{0}^{t_{m}}+\Gamma_{t_{m}}^{b}+\widetilde{\Theta}_{0}^{t_{m}}+\widetilde{\Gamma}_{t_{m}}^{b}\right) \mathscr{J}\right)^{-1}\sigma (x(\cdot))\right] \text{ in } \mathbb{X}^{*} \text{ as } n\rightarrow \infty.
	\end{align*}
	By assumption (H2) and lemma 3.11 of \cite{dos2011existence}, it follows that the operator $\mathscr{R}(t)$ is compact for all $t>0$. Consequently, the operator $\mathscr{R}(t)^{*}$ is also comact for each $t>0$. Therefore, using the weak convergence alongside the compactness of the operator $\mathscr{R}(t)^{*}$, we can arrive
	\begin{align}\label{4.14}
		\bigg\|&\mathscr{R}(b-t_{m})^{*}\mathscr{J}\left[\left(\alpha \mathrm{I}+\left(\Theta_{0}^{t_{m}}+\Gamma_{t_{m}}^{b}+\widetilde{\Theta}_{0}^{t_{m}}+\widetilde{\Gamma}_{t_{m}}^{b}\right) \mathscr{J}\right)^{-1}\sigma (x^{n}(\cdot))\right]-\nonumber\\&\mathscr{R}(b-t_{m})^{*}\mathscr{J}\left[\left(\alpha \mathrm{I}+\left(\Theta_{0}^{t_{m}}+\Gamma_{t_{m}}^{b}+\widetilde{\Theta}_{0}^{t_{m}}+\widetilde{\Gamma}_{t_{m}}^{b}\right) \mathscr{J}\right)^{-1}\sigma (x(\cdot))\right]\bigg\|_{\mathbb{X}^{*}} \rightarrow 0 \text{ as } n\rightarrow \infty,
	\end{align}
	for $t\in [t_m,b]$. Using \eqref{eq4.8} and \eqref{4.14}, we receive
	\begin{align*}
		 \left\|u_{\alpha}^{n}(t)-u_{\alpha}(t)\right\|_{\mathbb{U}}&=\bigg\|\bigg(\sum_{k=1}^{m} \mathrm{B}^{*} \mathscr{R}^{*}\left(t_{k}-s\right) \prod_{i=k+1}^{m} \mathscr{R}^{*}\left(t_{i}-t_{i-1}\right)  \mathscr{R}^{*}\left(b-t_{m}\right) \chi_{\left(t_{k-1}, t_{k}\right)}\\& \qquad+\mathrm{B}^{*} \mathscr{R}^{*}(b-s) \chi_{\left(t_{m}, b\right)}\bigg) \mathscr{J}\left[\left(\alpha \mathrm{I}+\left(\Theta_{0}^{t_{m}}+\Gamma_{t_{m}}^{b}+\widetilde{\Theta}_{0}^{t_{m}}+\widetilde{\Gamma}_{t_{m}}^{b}\right) \mathscr{J}\right)^{-1}\sigma (x^{n}(\cdot))\right]\\& \qquad - \bigg(\sum_{k=1}^{m} \mathrm{B}^{*} \mathscr{R}^{*}\left(t_{k}-s\right)\\& \qquad\qquad\times \prod_{i=k+1}^{m} \mathscr{R}^{*}\left(t_{i}-t_{i-1}\right) \mathscr{R}^{*}\left(b-t_{m}\right) \chi_{\left(t_{k-1}, t_{k}\right)}+\mathrm{B}^{*} \mathscr{R}^{*}(b-s) \chi_{\left(t_{m}, b\right)}\bigg) \\
		& \qquad\times \mathscr{J}\left[\left(\alpha \mathrm{I}+\left(\Theta_{0}^{t_{m}}+\Gamma_{t_{m}}^{b}+\widetilde{\Theta}_{0}^{t_{m}}+\widetilde{\Gamma}_{t_{m}}^{b}\right) \mathscr{J}\right)^{-1}\sigma (x(\cdot))\right]\bigg\|_{\mathbb{U}} \\
		\rightarrow 0, \text { as } n \rightarrow \infty
	\end{align*}
	
	For $t \in\left[0, t_{1}\right]$, we have
	
	$$
	\begin{aligned}
		\left\|\left(G_{\alpha} x^{n}\right)(t)-\left(G_{\alpha} x\right)(t)\right\|_{\mathbb{X}} & \leq M_{\mathscr{R}} M_{\mathrm{B}} \int_{0}^{t}\left\|u_{\alpha}^{n}(s)-u_{\alpha}(s)\right\|_{\mathbb{U}} d s \\
		& +M_{\mathscr{R}} \int_{0}^{t}\left\|f\left(s, \tilde{x}^{n}_s\right)-f(s, \tilde{x}_s)\right\|_{\mathbb{X}} d s \\
		& \rightarrow 0, \text { as } n \rightarrow \infty.
	\end{aligned}
	$$
	
	For $t \in\left[t_{k}, t_{k+1}\right], k=1, \ldots, m$., we have
	
	$$
	\begin{aligned}
		&\left\|x^{n}\left(t_{k}^{+}\right)-x\left(t_{k}^{+}\right)\right\|_{\mathbb{X}}\\ &\quad =\| \sum_{i=1}^{k} \prod_{j=k}^{i+1}\left(\mathcal{I}+\mathrm{D}_{j}\right) \mathscr{R}\left(t_{j}-t_{j-1}\right)\left(\mathcal{I}+\mathrm{D}_{i}\right) \int_{t_{i-1}}^{t_{i}} \mathscr{R}\left(t_{i}-s\right) \mathrm{B}\left(u_{\alpha}^{n}(s)-u_{\alpha}(s)\right) d s \\
		&\qquad \quad +\sum_{i=1}^{k} \prod_{j=k}^{i+1}\left(\mathcal{I}+\mathrm{D}_{j}\right) \mathscr{R}\left(t_{j}-t_{j-1}\right)\left(\mathcal{I}+\mathrm{D}_{i}\right) \int_{t_{i-1}}^{t_{i}} \mathscr{R}\left(t_{i}-s\right)\left(f\left(s, \tilde{x}^{n}_s\right)-f(s, \tilde{x}_s)\right) d s \|_{\mathbb{X}}, \\
		& \quad\leq \sum_{p=1}^{k}\left(1+M_{\mathrm{D}}\right)^{p} M_{\mathscr{R}}^{p} M_{\mathrm{B}} \int_{t_{p-1}}^{t_{p}}\left\|u_{\alpha}^{n}(s)-u_{\alpha}(s)\right\|_{\mathbb{U}} d s \\
		& \qquad\quad +\sum_{p=1}^{k}\left(1+M_{\mathrm{D}}\right)^{p} M_{\mathscr{R}}^{p} \int_{t_{p-1}}^{t_{p}}\left\|f\left(s, \tilde{x}^{n}_s\right)-f(s, \tilde{x}_s)\right\|_{\mathbb{X}} d s \rightarrow 0, \text { as } n \rightarrow \infty.
	\end{aligned}
	$$
	Then, we get
	$$
	\begin{aligned}
		\left\|\left(G_{\alpha} x^{n}\right)(t)-\left(G_{\alpha} x\right)(t)\right\|_{\mathbb{X}} & \leq M_{\mathscr{R}}\left\|x^{n}\left(t_{k}^{+}\right)-x\left(t_{k}^{+}\right)\right\|_{\mathbb{X}}+M_{\mathscr{R}} M_{\mathrm{B}} \int_{t_{k}}^{t}\left\|u_{\alpha}^{n}(s)-u_{\alpha}(s)\right\|_{\mathbb{U}} d s \|_{\mathbb{X}} \\
		& +M_{\mathscr{R}} \int_{0}^{t}\left\|f\left(s, \tilde{x}^{n}_{s}\right)-f(s,\tilde{x}_{s})\right\|_{\mathbb{X}} d s \rightarrow 0, \text { as } n \rightarrow \infty .
	\end{aligned}
	$$
	
	This shows that the operator $G_{\alpha}$ is continuous. Therefore, by invoking the \textit{Schauder fixed point theorem}, we infer that $G_{\alpha}$ admits at least one fixed point in $\mathcal{B}_r$. As a result, the system described by \eqref{P} possesses a mild solution.
\end{proof}
Our next objective is to establish the \emph{approximate controllability} of the semilinear system described by \eqref{P}. Specifically, we aim to demonstrate that for any desired final state and any given level of accuracy, there exists a control function that steers the mild solution of the system arbitrarily close to this target state within the time interval $J$.
 
	\begin{thm} \label{thm3.2}
		Assume that the hypotheses \textnormal{(H1)}–\textnormal{(H3)} hold. Under these conditions, the semilinear system \eqref{P} is approximately controllable on the interval $J$.
		
	\end{thm}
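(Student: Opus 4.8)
The plan is to exploit the control $u_\alpha$ from \eqref{eq4.8} together with the family of mild solutions produced by Theorem \ref{theorem4.1}, and to show that the corresponding terminal states approach the target $h$ as $\alpha \to 0^+$ along a subsequence. Fix $h \in \mathbb{X}$ and $\psi \in \mathfrak{B}$. For each $\alpha > 0$, Theorem \ref{theorem4.1} furnishes a mild solution $x^\alpha$ of \eqref{P} driven by $u_\alpha$. The first step is to substitute the explicit forms of $u_\alpha$, the impulse controls $v_k$, and $\widehat{\varphi}_\alpha$ into the solution formula \eqref{2} evaluated at $t = b$. Using the definitions of the operators $\Theta_0^{t_m}, \Gamma_{t_m}^b, \widetilde\Theta_0^{t_m}, \widetilde\Gamma_{t_m}^b$ and the factorisation $\mathrm{M}\mathrm{M}^* = \Theta_0^{t_m}+\Gamma_{t_m}^b+\widetilde\Theta_0^{t_m}+\widetilde\Gamma_{t_m}^b$, the controlled part of $x^\alpha(b)$ collapses to $\mathrm{M}\mathrm{M}^*\widehat\varphi_\alpha = \mathrm{M}\mathrm{M}^*\mathscr{J}\big[(\alpha \mathrm{I} + \mathrm{M}\mathrm{M}^*\mathscr{J})^{-1}\sigma(x^\alpha)\big]$, while the uncontrolled part equals $h - \sigma(x^\alpha)$ by the very definition of $\sigma$. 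Writing $w_\alpha := (\alpha \mathrm{I} + \mathrm{M}\mathrm{M}^*\mathscr{J})^{-1}\sigma(x^\alpha)$, so that $\alpha w_\alpha + \mathrm{M}\mathrm{M}^*\mathscr{J}(w_\alpha) = \sigma(x^\alpha)$, I expect the two contributions to combine into the controllability identity
\[
x^\alpha(b) - h = -\,\alpha\big(\alpha \mathrm{I} + \mathrm{M}\mathrm{M}^*\mathscr{J}\big)^{-1}\sigma(x^\alpha) =: -\,z_\alpha,
\]
where, by the homogeneity $\mathscr{J}[\alpha z]=\alpha\mathscr{J}[z]$, the element $z_\alpha = \alpha w_\alpha$ solves $\alpha z_\alpha + \mathrm{M}\mathrm{M}^*\mathscr{J}(z_\alpha) = \alpha\,\sigma(x^\alpha)$, i.e. equation \eqref{eq2.7} with right-hand side datum $\sigma(x^\alpha)$. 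It therefore suffices to prove $\|z_\alpha\|_{\mathbb{X}} \to 0$ along a sequence $\alpha \to 0^+$.

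Next I would establish the convergence of the right-hand side $\sigma(x^\alpha)$. By hypothesis (H3) we have $\|f(s, \tilde{x}^\alpha_s)\|_{\mathbb{X}} \le \gamma(s)$ with $\gamma \in L^1(J; \mathbb{R}^+)$ uniformly in $\alpha$, and together with the uniform bound $M_{\mathscr{R}}$ on the resolvent this yields $\|\sigma(x^\alpha)\|_{\mathbb{X}} \le \beta$ with $\beta$ independent of $\alpha$, exactly the estimate already computed in the proof of Theorem \ref{theorem4.1}. Thus $\{\sigma(x^\alpha)\}$ is bounded. To upgrade boundedness to relative compactness I would invoke hypothesis (H2), which via Lemma 3.11 of \cite{dos2011existence} makes $\mathscr{R}(t)$ compact for every $t > 0$. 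Since $\sigma(x^\alpha)$ is assembled from terms of the form $\int_{t_{i-1}}^{t_i}\mathscr{R}(t_i - s)\big[f(s, \tilde{x}^\alpha_s) + f_1'(s) + f_2(s)\big]\,\mathrm{d}s$ and $\int_{t_m}^{b}\mathscr{R}(b - s)\big[\cdots\big]\,\mathrm{d}s$, the compactness of $\mathscr{R}(t_i - s)$ for $s < t_i$ combined with the $L^1$-domination by $\gamma$ shows, by the standard compactness argument for such convolution integrals, that $\{\sigma(x^\alpha) : \alpha > 0\}$ is relatively compact in $\mathbb{X}$. Consequently there is a sequence $\alpha_n \to 0^+$ with $\sigma(x^{\alpha_n}) \to \sigma^*$ strongly in $\mathbb{X}$ for some $\sigma^* \in \mathbb{X}$.

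With strong convergence of the data in hand, I would conclude using the abstract limiting result for \eqref{eq2.7}. By the Remark following the equivalence theorem, hypothesis (H1) is precisely the statement that the linear system \eqref{LP} is approximately controllable, which is equivalent to the strict positivity of $\mathrm{M}\mathrm{M}^* = \Theta_0^{t_m}+\Gamma_{t_m}^b+\widetilde\Theta_0^{t_m}+\widetilde\Gamma_{t_m}^b$; in particular $\mathrm{M}\mathrm{M}^*$ is symmetric and positive. Since $z_{\alpha_n}$ solves \eqref{eq2.7} with right-hand side datum $\sigma(x^{\alpha_n}) \to \sigma^*$ strongly, the theorem governing \eqref{eq2.7} applies and yields a subsequence (not relabelled) along which $z_{\alpha_n} \to 0$ strongly in $\mathbb{X}$. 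Combining this with the controllability identity gives $\|x^{\alpha_n}(b) - h\|_{\mathbb{X}} = \|z_{\alpha_n}\|_{\mathbb{X}} \to 0$, so for every $\epsilon > 0$ one may select $\alpha_n$ small enough that $\|x^{\alpha_n}(b) - h\|_{\mathbb{X}} \le \epsilon$. As $h \in \mathbb{X}$ and $\psi \in \mathfrak{B}$ were arbitrary, the semilinear system \eqref{P} is approximately controllable on $J$.

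The main obstacle I anticipate is the compactness step: verifying that $\{\sigma(x^\alpha)\}$ is relatively compact so as to extract a \emph{strongly} convergent subsequence $\sigma(x^{\alpha_n}) \to \sigma^*$. Boundedness alone delivers only weak convergence, which is insufficient to invoke the limiting theorem for \eqref{eq2.7} (whose hypothesis demands strong convergence of the datum); it is exactly here that the compactness of $\mathscr{R}(t)$ for $t > 0$ coming from (H2) is indispensable, and care is needed because the integrands $f(\cdot, \tilde{x}^\alpha_\cdot)$ themselves need not converge — only their smoothing through the compact resolvent does. A secondary technical point is the bookkeeping in the first step, namely checking that the designed controls genuinely produce the clean identity $x^\alpha(b) - h = -z_\alpha$, which hinges on matching the products of $(\mathrm{I}+\mathrm{D}_j)$ and resolvent factors appearing in $\mathrm{M}$, $\mathrm{M}^*$ and in the post-impulse expression for $x^\alpha(t_k^+)$.
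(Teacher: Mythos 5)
Your proposal is correct in substance and shares the paper's skeleton: the identity $x^{\alpha}(b)-h=-\alpha\left(\alpha\mathrm{I}+\mathrm{M}\mathrm{M}^{*}\mathscr{J}\right)^{-1}\sigma(x^{\alpha})$ (the paper's \eqref{eq4.17}), strong convergence of the data $\sigma(x^{\alpha})$ along a subsequence, and then the abstract limiting result for \eqref{eq2.7} (Theorem 2.5 of \cite{mahmudov2003approximate}), with (H1) supplying positivity of $\mathrm{M}\mathrm{M}^{*}$. Where you genuinely differ is the mechanism for the middle step. The paper works on the integrands: by (H3) the family $\{f(\cdot,\tilde{x}^{\alpha}_{\cdot})\}$ is dominated by $\gamma\in L^{1}$, hence uniformly integrable, so the Dunford--Pettis theorem (using reflexivity of $\mathbb{X}$ for the vector-valued version) yields a subsequence converging weakly in $L^{1}(J;\mathbb{X})$, and the compactness of $\mathscr{R}(t)$ then upgrades this to strong convergence of the convolution integrals (Corollary 2.12 of \cite{arora2025controllability}), identifying the limit $\xi$ explicitly. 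You instead argue that the set $\{\sigma(x^{\alpha}):\alpha>0\}$ is relatively compact in $\mathbb{X}$ and extract a strongly convergent subsequence directly, which avoids the weak-$L^{1}$ machinery but leaves you the compactness bookkeeping you rightly flag as the main obstacle. One caution there: resolvent families do not satisfy the semigroup property, so the familiar trick $\int_{0}^{t-\epsilon}\mathscr{R}(t-s)g(s)\,\mathrm{d}s=\mathscr{R}(\epsilon)\int_{0}^{t-\epsilon}\mathscr{R}(t-\epsilon-s)g(s)\,\mathrm{d}s$ is unavailable; instead you must use the operator-norm continuity of $t\mapsto\mathscr{R}(t)$ on $(0,\infty)$ (which the paper itself invokes in Step 2 of Theorem~\ref{theorem4.1}, citing \cite{dos2011existence}): cut off a short terminal interval, whose contribution is small uniformly in $\alpha$ by the $L^{1}$-domination, and approximate $\mathscr{R}(t_{i}-\cdot)$ uniformly by finitely many compact operators $\mathscr{R}(\tau_{j})$ to get total boundedness. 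With that made explicit, the two arguments are interchangeable and of comparable length; your algebraic verification that $z_{\alpha}=\alpha w_{\alpha}$ solves \eqref{eq2.7} with datum $\sigma(x^{\alpha})$ via the homogeneity of $\mathscr{J}$ is exactly the computation the paper compresses into the phrase ``we can verify.''
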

	\begin{proof}
		From theorem \ref{theorem4.1}, we know that for every $\alpha >0$ and $h\in \mathbb{H}$, there exists a mild solution $x_{\alpha}\in \mathcal{PC}\left(J,\mathbb{X}\right)$ such that
		\begin{equation}
			x^{\alpha}(t)=	\begin{cases}
				\mathscr{R}(t) x(0)+\int_{0}^{t} \mathscr{R}(t-s) \big [\mathrm{B} u^{\alpha}(s)+f(s,x_{s}^{\alpha})+f_{1}^{\prime}(s)+f_{2}(s)\big] ds, \quad 0 \leq t \leq t_{1}, \\
				\mathscr{R}\left(t-t_{k}\right) x\left(t_{k}^{+}\right)+\int_{t_{k}}^{t} \mathscr{R}(t-s) \big[\mathrm{B} u^{\alpha}(s)+f(s,x_{s}^{\alpha})+f_{1}^{\prime}(s)+f_{2}(s)\big] d s, \\ \qquad\qquad t_{k}<t \leq t_{k+1},  k=1, \ldots, m,
			\end{cases}
		\end{equation}
		where
		\begin{align*}
			\begin{split}
				x\left(t_{k}^{+}\right) =&\prod_{j=k}^{1}\left(\mathrm{I}+\mathrm{D}_{j}\right) \mathscr{R}\left(t_{j}-t_{j-1}\right) \psi(0) +\sum_{i=1}^{k} \prod_{j=k}^{i+1}\left(\mathrm{I}+\mathrm{D}_{j}\right) \mathscr{R}\left(t_{j}-t_{j-1}\right)\left(\mathrm{I}+\mathrm{D}_{i}\right) \\
				&\qquad \times\left(\int_{t_{i-1}}^{t_{i}} \mathscr{R}\left(t_{i}-s\right) [\mathrm{B}u^{\alpha}(s)+f(s,x_{s}^{\alpha})] d s +\int_{t_{i-1}}^{t_{i}} \mathscr{R}(t-s) [f_{1}^{\prime}(s)+f_{2}(s)] d s\right)\\ &\qquad \quad +\sum_{i=2}^{k} \prod_{j=k}^{i}\left(\mathrm{I}+\mathrm{D}_{j}\right)\mathscr{R}\left(t_{j}-t_{j-1}\right) \mathrm{E}_{i-1} v_{i-1}+\mathrm{E}_{k} v_{k}. 
			\end{split}
		\end{align*}
		The control $u^{\alpha}(s)$ is defined as
		
		\begin{align}
			\begin{split}
				u^{\alpha}(s)=\bigg(\sum_{k=1}^{m}\mathrm{B}^{*}\mathscr{R}^{*}(t_{k}-s)\prod_{i=k+1}^{m} \mathscr{R}(t_{i}-t_{i-1})^{*}\mathscr{R}(b-t_{m})^{*}\chi(t_{k-1},t_{k})\\
				+\mathrm{B}^{*}\mathscr{R}(b-s)^{*}\chi(t_{m},b)\bigg)\widehat{\varphi}^{\alpha},\\
				v_{m}=\mathrm{E}_{m}^{*}\mathscr{R}(b-t_{m})^{*}\widehat{\varphi}^{\alpha},\quad
				v_{k}=\mathrm{E}_{k}^{*}\prod_{i=k}^{m} \mathscr{R}(t_{i}-t_{i-1})^{*}(\mathrm{I}+\mathrm{D}_{i}^{*})\mathscr{R}(b-t_{m})^{*}\widehat{\varphi}^{\alpha},
			\end{split}
		\end{align}
		with 
		\begin{align*}
			\widehat{\varphi}^{\alpha}=&\mathscr{J}\left[\left(\alpha \mathrm{I}+\left(\Theta_{0}^{t_{m}}+\Gamma_{t_{m}}^{b}+\widetilde{\Theta}_{0}^{t_{m}}+\widetilde{\Gamma}_{t_{m}}^{b}\right) \mathscr{J}\right)^{-1} \sigma\left(x(\cdot)\right)\right],
		\end{align*} 
		\begin{align*}
			\sigma\left(x(\cdot)\right)&=h-\mathscr{R}\left(b-t_{m}\right) \prod_{j=m}^{1}(\mathrm{I}+\mathrm{D}_{j}) \mathscr{R}\left(t_{j}-t_{j-1}\right) \psi(0)\\& \qquad-\int_{t_{m}}^{b} \mathscr{R}(b-s)\left[f(s,x_{s}^{\alpha})+f_{1}^{\prime}(s)+f_{2}(s)\right] d s\\
			&\qquad-\mathscr{R}\left(b-t_{m}\right)\sum_{i=1}^{m} \prod_{j=m}^{i+1}\left(\mathrm{I}+\mathrm{D}_{j}\right) \mathscr{R}\left(t_{j}-t_{j-1}\right)\left(\mathrm{I}+\mathrm{D}_{i}\right)\\& \qquad\qquad \times\int_{t_{i-1}}^{t_{i}} \mathscr{R}\left(t_{i}-s\right)\left[f(s,x_{s}^{\alpha})+f_{1}^{\prime}(s)+f_{2}(s)\right] d s.
		\end{align*}
		
		We can verify that
		
		\begin{align}\label{eq4.17}
			x^{\alpha}(b)= h-\alpha\left(\alpha \mathrm{I}+\left(\Theta_{0}^{t_{m}}+\Gamma_{t_{m}}^{b}+\widetilde{\Theta}_{0}^{t_{m}}+\widetilde{\Gamma}_{t_{m}}^{b}\right) \mathscr{J}\right)^{-1} \sigma\left(x(\cdot)\right).
		\end{align}
		By assumption (H3), we obtain
		
		\begin{align*}
			\int_{0}^{b}\left\|f(s,x^{\alpha_{i}}_s)\right\|_{\mathbb{X}} d s\leq \int_{0}^{b} \gamma(s) ds < +\infty, i\in \mathbb{N}.
		\end{align*}
		
		From above inequality, it is clear that the sequence $\left\{f(\cdot,x^{\alpha_{i}}_{(\cdot)})\right\}_{i=1}^{\infty}$ is uniformly integrable. By applying Dunford-Pettis theorem, we can find a subsequence of $\left\{f(\cdot,x^{\alpha_{i}}_{(\cdot)})\right\}_{i=1}^{\infty}$ still denoted by $\left\{f(\cdot,x^{\alpha_{i}}_{(\cdot)})\right\}_{i=1}^{\infty}$ such that 
		\begin{equation*}
			f(\cdot,x^{\alpha_{i}}_{(\cdot)})\rightharpoonup f(\cdot) in L^{1}(J,\mathbb{X}), \text{ as } \alpha_{i}\rightarrow 0^{+} (i\rightarrow \infty).
		\end{equation*}
		Further, we compute\newline
		
		\begin{align} \label{eq4.18}
			\left\|\sigma \left(x^{\alpha_{i}}(\cdot)\right)-\xi \right\|&\leq\bigg\|\int_{t_m}^{b} \mathscr{R}(b-s)\left[f(s,x^{\alpha_{i}}_s)-f(s)\right] ds\nonumber\\&\qquad-\mathscr{R}\left(b-t_{m}\right)\sum_{i=1}^{m} \prod_{j=m}^{i+1}\left(\mathrm{I}+\mathrm{D}_{j}\right) \mathscr{R}\left(t_{j}-t_{j-1}\right)\left(\mathrm{I}+\mathrm{D}_{i}\right)\nonumber\\&\qquad\quad \times\int_{t_{i-1}}^{t_{i}} \mathscr{R}\left(t_{i}-s\right)\left[f(s,x_{s}^{\alpha})-f(s)\right] d s\bigg\|_{\mathbb{X}}\nonumber\\
			&\rightarrow 0 \text{ as } \alpha_{i}\rightarrow 0^+ (i\rightarrow \infty),
		\end{align}
		where
		\begin{align*}
			\xi&=h-\mathscr{R}\left(b-t_{m}\right) \prod_{j=m}^{1}(\mathrm{I}+\mathrm{D}_{j}) \mathscr{R}\left(t_{j}-t_{j-1}\right) \psi(0)-\int_{t_{m}}^{b} \mathscr{R}(b-s)\left[f(s)+f_{1}^{\prime}(s)+f_{2}(s)\right] d s\\
			&\qquad-\mathscr{R}\left(b-t_{m}\right)\sum_{i=1}^{m} \prod_{j=m}^{i+1}\left(\mathrm{I}+\mathrm{D}_{j}\right) \mathscr{R}\left(t_{j}-t_{j-1}\right)\left(\mathrm{I}+\mathrm{D}_{i}\right)\\& \qquad\qquad \times\int_{t_{i-1}}^{t_{i}} \mathscr{R}\left(t_{i}-s\right)\left[f(s)+f_{1}^{\prime}(s)+f_{2}(s)\right] d s.
		\end{align*}
		
		The estimate \eqref{eq4.18} tends to zero by using the above weak convergence alongwith corollary 2.12 of \cite{arora2025controllability}.
		
		From the inequality \eqref{eq4.17} we have $z_{\alpha_{i}}=x^{\alpha_{i}}(b)-h$ for each $\alpha_{i}$ is a solution of the equation 
		\begin{equation*}
			\alpha_{i}z_{\alpha_{i}}+ MM^{*}\mathscr{J}[z_{\alpha_{i}}]=\alpha_{i}h_{\alpha_{i}},
		\end{equation*}
		where
		\begin{align*}
			h_{\alpha_{i}}&= -\sigma\left(x^{\alpha_{i}}(\cdot)\right)\\&= \mathscr{R}\left(b-t_{m}\right) \prod_{j=m}^{1}(\mathrm{I}+\mathrm{D}_{j}) \mathscr{R}\left(t_{j}-t_{j-1}\right) \psi(0)+\int_{t_{m}}^{b} \mathscr{R}(b-s)\left[f(s)+f_{1}^{\prime}(s)+f_{2}(s)\right] d s\\
			&\qquad+\mathscr{R}\left(b-t_{m}\right)\sum_{i=1}^{m} \prod_{j=m}^{i+1}\left(\mathrm{I}+\mathrm{D}_{j}\right) \mathscr{R}\left(t_{j}-t_{j-1}\right)\left(\mathrm{I}+\mathrm{D}_{i}\right)\\&\qquad\qquad \times\int_{t_{i-1}}^{t_{i}} \mathscr{R}\left(t_{i}-s\right)\left[f(s)+f_{1}^{\prime}(s)+f_{2}(s)\right] d s-h.
		\end{align*}
		From assumption (H1) we have the operator $MM^{*}=\left(\Theta_{0}^{t_{m}}+\Gamma_{t_{m}}^{b}+\widetilde{\Theta}_{0}^{t_{m}}+\widetilde{\Gamma}_{t_{m}}^{b}\right)$ is positive, now by aplying theorem 2.5 from \cite{mahmudov2003approximate} together with estimate \eqref{eq4.18}, we obtain
		\begin{equation*}
			\left\|x^{\alpha_{i}}(b)-h\right\|_{\mathbb{X}}=\left\| x_{\alpha_{i}}\right\|\rightarrow 0 \text{ as } \alpha_{i}\rightarrow 0^{+} (i\rightarrow\infty).
		\end{equation*}
		Therefore, the system \eqref{P} is approximately controllable.
	\end{proof}

	\section{Application}
	This section presents an example to demonstrate the applicability of the theoretical results established earlier. In particular, we study a semilinear impulsive neutral integro-delay system that emerges in the context of heat conduction in materials with fading memory. The functional framework of this example is defined within appropriate state $\mathrm{L}^{p}\left([0,\pi];\mathbb{R}\right)$ and the control space $\mathrm{L}^{2}\left([0,\pi];\mathbb{R}\right)$
		\begin{align}\label{example}
				\frac{\partial}{\partial t}\left[x(t,\zeta)+\int_{-\infty}^{t}(t-s)^{\gamma}e^{-\kappa(t-s)}x(s,\zeta) ds\right]&= \frac{\partial^{2}}{\partial\zeta^{2}}z(t,\zeta)+\eta(t,\zeta)+ \int_{-\infty}^{t}e^{-\mu(t-s)}x(s,\zeta) ds \nonumber\\  &+\int_{-\infty}^{t}\mathcal{H}(t-s)x(s,\zeta) ds,\quad \zeta\in[0,\pi], \nonumber\\
			\qquad t\in (0,b], t\ne \left\{t_1,\dots,t_m\right\}, \nonumber\\
			x(t,0)&=0=x(t,\pi), \quad t\in J, t \ne \left\{t_1,\dots,t_m\right\},\\
			\Delta x\left(t_{k},\zeta\right)&= -x\left(t_{k},\zeta\right)-v_{k}(\zeta), \quad \zeta \in (0,\pi), k=0,\dots m-1, \nonumber\\
			x\left(\theta,\zeta\right)&= \psi(\theta,\zeta),\quad \zeta\in (-\infty,0], \zeta\in [0,\pi] \nonumber,
	\end{align}
where, $\gamma \in (0,1)$ and $\kappa, \mu$ be fixed positive constants and $\mathcal{H} : [0,\infty) \rightarrow \mathbb{R}$ and $\psi : (-\infty,0] \times [0,\pi] \rightarrow \mathbb{R}$ assumed to be suitable functions. Also, let $\eta : J \times [0,\pi] \rightarrow \mathbb{R}$ be square integrable with respect to both variables.

We now reformulate the original system into its abstract version as described in \eqref{P}. To this end, we define the state space as a reflexive Banach space $\mathbb{X} = L^p([0,\pi]; \mathbb{R})$ for some $p \in [2, \infty)$ and choose the control space as $\mathbb{U} = L^2([0,\pi]; \mathbb{R})$. The phase space is taken as $\mathfrak{B} = C_0 \times L_\omega^1(\mathbb{X})$. It is known that the dual space $\mathbb{X}^* = L^{\frac{p}{p-1}}([0,\pi]; \mathbb{R})$ is uniformly convex.

We define the operator $\mathrm{A} : D(\mathrm{A}) \subset \mathbb{X} \to \mathbb{X}$ by
\begin{equation*}
	\mathrm{A}g = g'', \quad D(\mathrm{A}) = W^{2,p}([0,\pi]; \mathbb{R}) \cap W_0^{1,p}([0,\pi]; \mathbb{R}).
\end{equation*}

Let $\mathrm{N}(t)g = e^{-\mu t}g$ and $\mathrm{G}(t)g = t^\alpha e^{-\kappa t}g$ for all $g \in \mathbb{X}$. Additionally, the operator $\mathrm{A}$ admits a spectral representation given by:
\begin{equation*}
	\mathrm{A}g = -\sum_{k=1}^\infty k^2 \langle g, \tilde{a}_k \rangle\, \tilde{a}_k, \quad g \in D(\mathrm{A}),
\end{equation*}
where $\tilde{a}_k(\zeta) = \sqrt{\frac{2}{\pi}} \sin(k\zeta)$ and the inner product is defined by $\langle g, \tilde{a}_k \rangle := \int_0^\pi g(\zeta) \tilde{a}_k(\zeta)\,\mathrm{d}\zeta$.

Assume that assumptions (Cd1)–(Cd4) from \cite{arora2025controllability} hold. It then follows that the abstract linear system corresponding to the physical model \eqref{example} possesses a resolvent family $\mathcal{R}_p(\cdot)$ in $\mathbb{X}$. Define $x(t)(\zeta) := x(t,\zeta)$ and $\psi(t)(\zeta) := \psi(t,\zeta)$ for $t \in J$ and $\zeta \in [0,\pi]$.

As shown in \cite{arora2025controllability}, the functions $f : [0,b] \times \mathfrak{B} \rightarrow \mathbb{X}$ and $f_1, f_2 : [0,T] \rightarrow \mathbb{X}$ are given by
\begin{align*}
	f(t, \phi)(\zeta) &= \int_{-\infty}^0 \mathcal{H}(-s) \phi(s, \zeta)\,\mathrm{d}s, \\
	f_1(t)(\zeta) &= \int_{-\infty}^0 (t - s)^\alpha e^{-\kappa(t - s)} \psi(s, \zeta)\,\mathrm{d}s, \\
	f_2(t)(\zeta) &= \int_{-\infty}^0 e^{-\mu(t - s)} \mathrm{A} \psi(s, \zeta)\,\mathrm{d}s.
\end{align*}

These functions are well-defined, with $f$ being continuous and bounded by $\|f\|_{\mathcal{L}(\mathfrak{B}; \mathbb{X})} \leq K_f$. Furthermore, $f_1 \in C^1([0,T]; \mathbb{X})$ and $f_2 \in C([0,T]; \mathbb{X})$.

The control operator $\mathrm{B} : \mathbb{U} \rightarrow \mathbb{X}$ is defined by
\begin{equation*}
	\mathrm{B}u(t)(\zeta) := \eta(t,\zeta) = \int_0^\pi K(\omega, \zeta) u(t)(\omega)\,\mathrm{d}\omega, \quad t \in J, \ \zeta \in [0,\pi],
\end{equation*}
where $K \in C([0,\pi] \times [0,\pi]; \mathbb{R})$ is symmetric: $K(\omega,\zeta) = K(\zeta,\omega)$. It is known that under these assumptions, $\mathrm{B}$ is bounded (see \cite{wu2012theory}). One can choose a specific kernel—for instance, $K(\zeta, \omega) = \min\{\zeta, \omega\}$—to ensure that $\mathrm{B}$ is injective.

For the impulsive effect, we consider the condition:
\[
\Delta x(t_k, \zeta) = -x(t_k, \zeta) - v_k(\zeta),
\]
which corresponds to choosing the operators $\mathrm{D}_k = \mathrm{E}_k = -\mathrm{I}$.

With all these elements in place, the system \eqref{example} can be equivalently cast into the abstract form \eqref{P}, which satisfies the assumptions required for controllability analysis.

To verify the approximate controllability of the linearized form of system \eqref{P}, suppose for contradiction that there exists $w^* \in \mathbb{X}^*$ such that $\mathrm{B}^* \mathcal{R}_p(T - t)^* w^* = 0$ for all $t \in [0,T]$. Then, one obtains:
\[
\mathrm{B}^* \mathcal{R}_p(T - t)^* w^* = 0 \Rightarrow \mathcal{R}_p(T - t)^* w^* = 0 \Rightarrow w^* = 0,
\]
which leads to a contradiction unless $w^* = 0$. Hence, the corresponding linear system is approximately controllable.

Applying Theorem~\ref{thm3.2}, we conclude that the semilinear system \eqref{P}, and therefore system \eqref{example}, is approximately controllable.

	\section{Concluding remarks and future scope}
	We discussed properties of an impulsive resolvent operator in a reflexive Banach space with a uniform convex dual. First, we write the mild solution with the resolvent family $\mathscr{R}$ as defined in definition \refeq{def2.1}, then we move to define and establish the properties of the impulsive resolvent operator. We checked for the approximate controllability of the linear system \eqref{LP} and then, by defining a feedback control, we proved the existence of a mild solution with this feedback control by using the Schauder fixed-point theorem. Subsequently, we proved the approximate controllability of the problem considered \eqref{P}. Finally, we give an example of the heat conduction phenomenon with fading memory to verify all the results achieved. This study can be generalised to study the approximate controllability and finite approximate controllability of evolution equations with singular memory in a more general Banach space.
	
	\section{Acknowledgement}
	The first author, Garima Gupta, gratefully acknowledges the financial support received from the Ministry of Education, Government of India, in the form of an MHRD fellowship during the course of this research. The authors also expresses sincere thanks to peers for their valuable suggestions and encouragement throughout this work.
	
	\section{Author contribution declaration}
	\textbf{Garima Gupta:} Problem formulation, writing original draft, methodology, conceptualization.
	\textbf{Jaydev Dabas:} Problem formulation, reviewing, editing, conceptualization. 
	
	\section{Declaration of competing interest}
	The author declare that they have no conflict of interests.

	\bibliographystyle{elsarticle-num}
	\bibliography{references}
	
\end{document}